\DeclareExpandableDocumentCommand{\eval}{m}{\int_eval:n {#1}}
\renewcommand{\S}{\mathcal{S}}
\newcommand{\T}{\mathsf{T}}
\newcommand{\D}{\mathsf{D}}
\newcommand{\Df}[1]{\mathsf{D^f}(#1)}
\newcommand{\md}{\mathsf{mod}}
\DeclareMathOperator{\thick}{\mathsf{thick}}
\newcommand{\m}{\mathfrak{m}}
\newcommand{\p}{\mathfrak{p}}
\newcommand{\V}{{\rm V}}
\newcommand{\ov}[1]{\overline{#1}}
\newcommand{\x}{{\bm{x}}}
\newcommand{\g}{{\bm{g}}}
\newcommand{\del}{\partial}
\newcommand{\f}{{\bm{f}}}
\newcommand{\y}{{\bm{y}}}
\DeclareMathOperator{\pd}{pd}
\DeclareMathOperator{\depth}{depth}
\DeclareMathOperator{\h}{H}
\DeclareMathOperator{\gr}{gr}
\newcommand{\lb}{\llbracket}
\newcommand{\rb}{\rrbracket}
\DeclareMathOperator{\Spec}{Spec}
\DeclareMathOperator{\Supp}{Supp}
\DeclareMathOperator{\Ext}{Ext}
\DeclareMathOperator{\Hom}{Hom}
\DeclareMathOperator{\Kos}{Kos}
\DeclareMathOperator{\supp}{Supp}
\newcommand{\xra}{\xrightarrow}
\newcounter{intro}
\newtheorem{introthm}[intro]{Theorem}
\newtheorem{theorem}[subsection]{Theorem}
\newtheorem{lemma}[subsection]{Lemma}
\newtheorem{corollary}[subsection]{Corollary}
\theoremstyle{definition}
\newtheorem{definition}[subsection]{Definition}
\newtheorem{example}[subsection]{Example}
\newtheorem{notation}[subsection]{Notation}
\newtheorem{strategy}[subsection]{Strategy}
\newtheorem{algorithm}[subsection]{Algorithm}
\newtheorem{fact}[subsection]{Fact}
\newtheorem{remark}[subsection]{Remark}
\newtheorem{question}[subsection]{Question}
\numberwithin{equation}{subsection}
\begin{document}

\title[Constructing non-proxy small test modules]{Constructing non-proxy small test modules for the complete intersection property}

\author[B.~Briggs]{Benjamin Briggs}
\address{Department of Mathematics,
University of Utah, Salt Lake City, UT 84112, U.S.A.}
\email{briggs@math.utah.edu}

\author[E.~Grifo]{Elo\'{i}sa Grifo}
\address{Department of Mathematics,
University of California, Riverside, CA 92521, U.S.A.}
\email{eloisa.grifo@ucr.edu}

\author[J.~Pollitz]{Josh Pollitz}
\address{Department of Mathematics,
University of Utah, Salt Lake City, UT 84112, U.S.A.}
\email{pollitz@math.utah.edu}


\keywords{complete intersection, proxy small module, support variety.}
\subjclass[2020]{13D09 (primary), 13H10 (secondary)}

\maketitle

\begin{abstract}
    A local ring $R$ is regular if and only if every finitely generated $R$-module has finite projective dimension. Moreover, the residue field $k$ is a test module: $R$ is regular if and only if $k$ has finite projective dimension. This characterization can be extended to the bounded derived category $\Df{R}$, which contains only small objects if and only if $R$ is regular.
    
    Recent results of Pollitz, completing work initiated by Dwyer-Greenlees-Iyengar,  yield an analogous characterization for complete intersections: $R$ is a complete intersection if and only if every object in $\Df{R}$ is proxy small. In this paper, we study a return to the world of $R$-modules, and search for finitely generated $R$-modules that are not proxy small whenever $R$ is not a complete intersection. We give an algorithm to construct such modules in certain settings, including over equipresented rings and Stanley-Reisner rings. 
\end{abstract}

\vspace{1em}

\section*{Introduction}

Auslander, Buchsbaum and Serre \cite{AuslanderBuchsbaum,Serre} characterized regular local rings in homological terms: a local ring $R$ is regular  if and only if every finitely generated $R$-module has finite projective dimension. Moreover, it is enough to test if the residue field of $R$ has finite projective dimension. The characterization can be phrased in \emph{homotopical} terms, using only the triangulated category structure of the derived category $\D(R)$: $R$ is regular if and only if every complex of $R$-modules with finitely generated homology is quasi-isomorphic to a bounded complex of finitely generated projective $R$-modules, i.e.\ a \emph{perfect complex}, or a \emph{small object} in $\D(R)$.

In \cite{DGI}, Dwyer, Greenlees and Iyengar proposed an analogous characterization for complete intersections. The third author recently settled their question in the positive in  \cite{Pol}, and in turn established a homotopical characterization of complete intersections akin to the homotopical version of the Auslander, Buchsbaum and Serre theorem. Even more recently, the first and third authors, along with Iyengar and Letz (see \cite{BILP}), have provided a new proof, even in the relative case, of this homotopical characterization for complete intersections.

The characterization in \cite{Pol} involves understanding how objects in $\Df{R}$ build small objects; we say that a complex of $R$-modules $M$ \emph{finitely builds} a complex of $R$-modules $N$ if one can obtain $N$ using finitely many cones and shifts and retracts starting from $M$. 

The main result of \cite{Pol} says that $R$ is a complete intersection if and only if every object in $\Df{R}$ finitely builds a nontrivial small object; the forward implication had previously been shown in \cite{DGI}. One can in fact require that each $M$ in $\Df{R}$ builds a perfect complex with the same support as $M$, in which case we say that $M$ is \emph{proxy small}. This should be understood as a weakening of the small property. Since its introduction, the proxy small property has been studied by various authors \cite{Bergh,BILP,DGI2,DGI,GreenleesStevenson,quasipdim,Letz,Pol,Shamir}.

In this paper, our main goal is to complete the picture with a statement involving only \emph{finitely generated $R$-modules}. We aim to show that if every finitely generated $R$-module is proxy small, then $R$ must be a complete intersection. Furthermore, we would like to explicitly construct finitely many test modules $M_1, \ldots, M_t$, playing a role akin to the role that $k$ plays with respect to regularity --- if $R$ is not a complete intersection, one of the $M_i$ must fail to be proxy small. There is another characterization of complete intersections in terms of properties of finitely generated modules --- namely, in terms of finiteness of CI-dimension \cite{AGP}. Finite CI-dimension implies proxy smallness \cite[6.5]{Letz}, but the two notions are not the same --- in particular, the residue field is always proxy small, but its finite CI-dimension is a test for the complete intersection property.

We succeed in this goal when $R$ is equipresented, meaning that given a minimal Cohen presentation $\widehat{R} \cong Q/I$, where $Q$ is a regular local ring, every minimal generator of $I$ has the same $\m$-adic order.

\begin{introthm}\label{introthm} (see Corollary \ref{corep}.) 
For an equipresented local ring $R$,  the following are equivalent:
\begin{enumerate}
    \item $R$ is a complete intersection;
    \item every finitely generated $R$-module is proxy small;
    \item every finite length $R$-module is proxy small.
\end{enumerate}
If the residue field of $R$ is infinite, these are also equivalent to:
\begin{enumerate}
  \setcounter{enumi}{3}
    \item every quotient $R \twoheadrightarrow S$ with $S$ an artinian hypersurface is proxy small.
\end{enumerate}
\end{introthm}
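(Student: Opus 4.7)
The plan is to prove $(1) \Rightarrow (2) \Rightarrow (3) \Rightarrow (1)$ and then deduce the equivalence with $(4)$ under the additional hypothesis that the residue field is infinite. The implication $(1) \Rightarrow (2)$ is the main theorem of \cite{Pol}: over a complete intersection, every object of $\Df{R}$ is proxy small. The implications $(2) \Rightarrow (3)$ and $(2) \Rightarrow (4)$ (in the infinite residue field case) are immediate, since the modules involved are finitely generated, and $(1) \Rightarrow (4)$ is a special case of $(1) \Rightarrow (2)$. The substance of the theorem therefore lies in the implications $(3) \Rightarrow (1)$ and $(4) \Rightarrow (1)$, for which I would argue contrapositively.

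Assume $R$ is not a complete intersection. Passing to the completion (using that proxy smallness and the complete intersection property are both faithfully flat properties with respect to $R \to \widehat{R}$), we write $\widehat{R} \cong Q/I$ for $Q$ regular local, with minimal generating set $f_1, \ldots, f_c$ of $I$ all of the same $\m$-adic order $d$ by equipresentation. The non-complete-intersection assumption is $c > \het I$, so the $f_i$ carry nontrivial syzygies with coefficients in $\m$. The strategy is to apply the algorithm promised in the abstract, which uses precisely this equipresented data to construct a finite length quotient $S = R/J$ that fails to be proxy small. I would expect non-proxy-smallness to be certified by showing that no perfect complex in $\thick(S)$ shares the support of $S$; the homogeneity afforded by equipresentation should reduce this check to an analogous computation on the associated graded ring $\gr_\m(R)$, which is standard graded with relations of pure degree $d$.

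For $(4) \Rightarrow (1)$, I would refine the construction from $(3) \Rightarrow (1)$ by using the infinite residue field to choose the remaining elements of $J$ in sufficiently general position so that $S$ is moreover an artinian hypersurface, while the genericity preserves the syzygy-theoretic obstruction to proxy smallness that is encoded into $S$. This is a typical Noether-normalization-flavored argument, and the infinite residue field hypothesis is exactly what is needed to make such generic choices.

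The principal obstacle, and presumably the technical heart of the paper, is twofold: the explicit construction of $S$ from the equipresented data $(Q; f_1, \ldots, f_c)$, and the verification that $S$ is not proxy small. The first step is combinatorial/linear-algebraic in nature and should follow a systematic procedure based on the $f_i$ and their syzygies; the second is where the real work lies, since proxy smallness is a thick-subcategory condition that is generally hard to rule out. Detecting its failure presumably requires a support-theoretic invariant -- a cohomological or cosupport-theoretic gadget sensitive enough to distinguish perfect complexes from every object of $\thick(S)$ -- and the equipresented hypothesis is essential because it enables the relevant computations to be transported to the graded setting on $\gr_\m(R)$.
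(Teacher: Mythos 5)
Your high-level architecture is right: $(1)\Rightarrow(2)$ is the main theorem of \cite{Pol}, $(2)\Rightarrow(3)$ and $(2)\Rightarrow(4)$ are trivial, and the real content is the contrapositive of $(3)\Rightarrow(1)$ (resp.\ $(4)\Rightarrow(1)$). You also correctly guess that the infinite residue field is used for a Noether-normalization-flavored generic choice of a regular sequence, and that a support-theoretic invariant must be the obstruction. But the proposal stops precisely where the proof begins, and where it points, it points in a slightly wrong direction. Two concrete issues.

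First, the mechanism you gesture at --- ``certify non-proxy-smallness by showing no perfect complex in $\thick(S)$ shares the support of $S$,'' computed by passing to $\gr_{\m}(R)$ --- is not what the paper does, and is not obviously workable: showing directly that a thick subcategory avoids all perfect complexes of a given support is exactly the hard problem, and equipresentation does not let you transport proxy smallness across $\gr_{\m}(R)$. The actual tool is the cohomological support $\V_R(M)\subseteq \V_R = I/\m I$ (from \cite{Jor,AIRestricting,Pol2}), together with two facts: (i) if $M$ is proxy small then $\V_R(R)\subseteq\V_R(M)$, and (ii) $\V_R(R)=0$ iff $R$ is a complete intersection. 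The decisive computation is Lemma~\ref{l:ben}: for an artinian complete intersection quotient $Q/J$ with $I\subseteq J$, one has $\V_R(Q/J)=\ker(I/\m I\to J/\m J)$. This reduces everything to linear algebra in $I/\m I$, and is the step your proposal is missing entirely. The role of equipresentation is also different from what you describe: it is not that it lets you pass to the graded ring, but that when every minimal generator of $I$ has the same order, the inequality $\dim_k(\m^{d+1}\cap I/\m I) < \dim_k\operatorname{span}\V_R(R)$ that drives Theorem~\ref{t:characterization} holds trivially (the left side is $0$; the right side is positive since $R$ is not a complete intersection).

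Second, your plan is to construct a \emph{single} module $S$ and ``verify'' it is not proxy small, but the paper's argument is inherently non-constructive in a different sense. One builds a finite list of artinian hypersurface quotients $Q/J_1,\dots,Q/J_t$ (via Lemmas~\ref{l:primeavoidance} and \ref{l:testmodule}, which are where the generic linear forms and $\gr Q$ actually enter) so that $\bigcap_j\V_R(Q/J_j)$ has codimension $\geq c$, hence cannot contain $\V_R(R)$; by fact (i) above, \emph{at least one} $Q/J_j$ must fail to be proxy small, without necessarily saying which. Absent the support calculus, the verification step you flag as ``where the real work lies'' has no engine; with it, that step becomes elementary. The finite residue field case is then handled separately (for $(3)\Rightarrow(1)$) by a flat base change $Q\to Q'$ with $\m Q'=\m'$ and $k'$ infinite, realizing $Q'$ as a colimit of finite flat regular extensions $Q_i$ and descending the constructed ideals to some $Q_i$; your proposal does not address this case at all.
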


For equipresented rings, Theorem \ref{introthm} strengthens the  characterization of complete intersections established in \cite{Pol} (and \cite{BILP}) with a new proof.  We also give an algorithm to find quotients of $R$ that are artinian hypersurfaces but not proxy small, whenever $R$ is not a complete intersection and has infinite residue field.

Equipresented rings are part of a larger class of rings for which the theorem holds (cf.~Theorem \ref{t:characterization}), which are said to have \emph{large enough cohomological support} (see Definition \ref{definition large support}). 
Over such rings, provided the residue field is infinite,

\vspace{1em}
\begin{quote}
$R$ is a complete intersection if and only if every surjection to an artinian hypersurface is proxy small.  
\end{quote}
\vspace{1em}

We expect this property to characterize complete intersections among all local rings, analogously to the characterization of regular local rings in terms of their residue fields. This remains open in general. 

For all equipresented rings, and more generally all rings of large enough cohomological support, we also answer a question of Gheibi, Jorgensen, and Takahashi \cite[Question 3.9]{quasipdim}. This question proposes yet another characterization of complete intersections: that $R$ is a local complete intersection if and only if every finitely generated $R$-module has finite quasi-projective dimension (see the end of Section \ref{section main result} and \cite{quasipdim} for a definition and other details).

In the last section we construct explicit modules that are not proxy small over various rings.  Our methods are not limited to equipresented rings, and we include here all Stanley-Reisner rings (see Example \ref{example monomial}) and short Gorenstein rings (see Example \ref{exampleshortgor}). 

In Sections \ref{section proxy small} and \ref{section support}, we recall the definition and basic properties of proxy small modules and of cohomological support, respectively. In Section \ref{section main result} we prove our main result, and give an algorithm for finding modules that are not proxy small. In Section \ref{section examples} we apply the results of the previous section in various examples.

\section{Proxy small objects}\label{section proxy small}
Let $R$ be a commutative noetherian ring. We let $\D(R)$ denote its derived category of (left) $R$-modules (see \cite[1.2]{krause} for more on the derived category). Much of the homological information of $R$ is captured by how several of its subcategories are related to each other using the triangulated category structure of $\D(R)$. In what follows we clarify this point and introduce the main objects of interest. First, we need some terminology.

\begin{definition}
A \emph{thick subcategory} of $\D(R)$ is a full subcategory $\T$ that is closed under taking shifts, cones, and direct summands. That is, if $X=X'\oplus X''$ is an object of $\T$, then $X'$ and $X''$ are objects of $\T.$ The smallest thick subcategory of $\D(R)$ containing an object $M$ of $\D(R)$ is denoted $\thick M$ and is called the \emph{thick closure of $M$}; this exists since an intersection of thick subcategories is again thick. Alternatively, one can define $\thick M$ inductively as in \cite{HPC}. 
\end{definition}

In the terminology used in the Introduction, $M$ finitely builds $X$ precisely when $X$ is in the thick closure of $M$.

\begin{example}\label{ex:fg}
The full subcategory of $\D(R)$ consisting of objects $M$ such that $\h(M)$ is a finitely generated graded $R$-module forms a thick subcategory of $\D(R)$, since $R$ is noetherian. This category will be denoted $\Df{R}.$ The category of finitely generated $R$-modules, denoted $\md(R)$, sits inside $\Df{R}$ as a full subcategory by including each finitely generated $R$-module as a complex concentrated in degree zero; however, $\md(R)$ is not a thick subcategory (it fails even to be triangulated).
\end{example}

\begin{example}\label{ex:small}
The full subcategory of $\D(R)$ consisting of complexes of $R$-modules that are quasi-isomorphic to a bounded complex of finitely generated projective $R$-modules forms a thick subcategory of $\D(R).$ Moreover, this category is exactly $\thick R$ and its objects will be called \emph{small}; note that these are also referred to as \emph{perfect} complexes \cite{HPC}, but we have opted for the former terminology since it describes these complexes categorically as objects of $\D(R)$. Namely, for each small object $M$, $\Hom_{\D(R)}(M,-)$ commutes with arbitrary (set-indexed) direct sums \cite[3.7]{DGI}. 
\end{example}

\begin{fact}\label{c:reg}
The relation between the categories discussed in \ref{ex:fg} and \ref{ex:small} can be used to detect the singularity of $R$. Namely, the following are equivalent:
\begin{enumerate}
\item $R$ is regular (meaning that each localization is a regular local ring);
\item each object of $\Df{R}$ is small;
\item each object of $\md(R)$ is small;
\item each residue field of $R$ is small.
\end{enumerate}
This is essentially the Auslander-Buchsbaum-Serre theorem \cite{AuslanderBuchsbaum,Serre} combined with a local-to-global result of Bass and Murthy \cite[4.5]{BM} (see also \cite[4.1]{AIL}).
\end{fact}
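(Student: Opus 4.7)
The plan is to close the cycle (1) $\Rightarrow$ (2) $\Rightarrow$ (3) $\Rightarrow$ (4) $\Rightarrow$ (1). The implications (2) $\Rightarrow$ (3) $\Rightarrow$ (4) are immediate: $\md(R)$ sits inside $\Df{R}$ as a full subcategory (Example~\ref{ex:fg}), and for each maximal ideal $\m$ the residue field $R/\m$ is an object of $\md(R)$. So the real work is in (4) $\Rightarrow$ (1) and (1) $\Rightarrow$ (2).

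For (4) $\Rightarrow$ (1), I would argue one maximal ideal at a time. Fix $\m \in \Max R$; the goal is to prove $R_\m$ is regular. Since $R/\m$ is small in $\D(R)$, it is quasi-isomorphic to a bounded complex of finitely generated projective $R$-modules. Localizing at $\m$ is exact and sends finitely generated projectives to finitely generated projectives, so $R_\m/\m R_\m$ is small in $\D(R_\m)$, i.e., has finite projective dimension over $R_\m$. The classical Auslander--Buchsbaum--Serre theorem then gives that $R_\m$ is regular, and since this holds at every maximal ideal, $R$ is regular.

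For (1) $\Rightarrow$ (2), I would proceed in a local step followed by a globalization step. If $R$ is local and regular, then every $M \in \md(R)$ has finite projective dimension by Auslander--Buchsbaum--Serre, hence is small. For a general $X \in \Df{R}$, an induction on the amplitude of $\h(X)$ via the standard truncation triangle
\[
\tau_{\leq n} X \to X \to \tau_{> n} X \to \shift \tau_{\leq n} X
\]
expresses $X$ as a cone of complexes of strictly smaller amplitude, reducing to the module case already handled; thus every object of $\Df{R}$ is small when $R$ is regular local. For an arbitrary regular $R$ and $X \in \Df{R}$, each localization $R_\p$ is regular local, so by the previous paragraph $X_\p$ is small in $\D(R_\p)$ for every $\p \in \Spec R$. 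The local-to-global principle of Bass and Murthy \cite[4.5]{BM} then allows us to conclude that $X$ itself is small in $\D(R)$.

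The main obstacle is the globalization step in (1) $\Rightarrow$ (2): smallness in $\D(R)$ is a global condition that is not evidently testable on stalks, and this is precisely what the Bass--Murthy local-to-global result supplies. Once this is granted, the remaining ingredients are essentially just Auslander--Buchsbaum--Serre together with the triangulated formalism, both of which are entirely standard.
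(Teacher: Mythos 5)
Your argument is correct and it implements exactly what the paper points to: the implications $(2)\Rightarrow(3)\Rightarrow(4)$ by inclusion of subcategories, $(4)\Rightarrow(1)$ via the classical Auslander--Buchsbaum--Serre theorem after localizing (and noting regularity at maximal ideals descends to all primes), and $(1)\Rightarrow(2)$ by first handling the regular local case (finite projective dimension plus an amplitude induction via truncation triangles) and then globalizing with the Bass--Murthy local-to-global criterion for perfection. Since the paper gives no further detail beyond citing Auslander--Buchsbaum--Serre and Bass--Murthy, your proof is a faithful expansion of the same approach.
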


The analogous characterization for locally complete intersections is in terms of proxy small objects of $\D(R)$. These were introduced and studied by Dwyer, Greenlees and Iyengar in \cite{DGI2,DGI}. To define them, we recall that the support of a complex $X$ is $\supp_R X\coloneqq \{\p\in \Spec R: X_\p\not\simeq 0\}$, extending the usual notion for $R$-modules.

\begin{definition}
A complex of $R$-modules $M$ is \emph{proxy small} if $\thick M$ contains a small object $P$ such that $\supp_R P=\supp_R M$.
\end{definition}

\begin{remark}
Let $M$ be a proxy small object. It follows easily that the support of $M$ is a closed subset of $\Spec R$ \cite[Proposition 4.4]{DGI}. Furthermore, as a consequence of a theorem of Hopkins and Neeman \cite{H,N}, the object $P$ witnessing $M$ as proxy small can be taken to be the Koszul complex on an ideal $I$ defining $\supp_R M$, i.e.,
\[\supp_R (M)=\supp_R (R/I).\] 
In particular, when $R$ is local and $M$ has finite length homology, $M$ is proxy small if and only if $\thick M$ contains the Koszul complex on a list of generators for the maximal ideal of $R$. 
\end{remark}

\begin{fact}\label{c:lci}
In a similar fashion to Fact \ref{c:reg}, the following are equivalent:
\begin{enumerate}
    \item $R$ is locally a complete intersection, meaning that each localization is a local complete intersection;
    \item each object of $\Df{R}$ is proxy small.
\end{enumerate}
This is \cite[5.2]{Pol} combined with a local-to-global result of Letz \cite[4.5]{Letz}. This can also be recovered by recent work in \cite{BILP}. One of the main points of this article is to fill in the missing analogous conditions to conditions (3) and (4) from \ref{c:reg}. As mentioned previously, the difficulties arise as $\md(R)$ does not respect the triangulated structure of $\Df{R}$. 
\end{fact}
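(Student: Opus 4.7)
The plan is to reduce to the local case and handle each implication separately, leveraging the analogy with Fact \ref{c:reg}. First I would invoke the local-to-global result of Letz \cite[4.5]{Letz}, which asserts that $M \in \Df{R}$ is proxy small if and only if $M_\p$ is proxy small in $\Df{R_\p}$ for every $\p \in \Spec R$. This reduces the statement to a local ring $(R,\m,k)$, where the task becomes to show that $R$ is a complete intersection if and only if every object of $\Df{R}$ is proxy small.

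For the forward direction $(1)\Rightarrow(2)$, I would follow the approach of Dwyer-Greenlees-Iyengar \cite{DGI}. Writing $\widehat{R}\cong Q/(f_1,\ldots,f_c)$ with $Q$ regular and $f_1,\ldots,f_c$ a regular sequence, the Eisenbud operators endow $\Ext_R^*(M,k)$ with the structure of a finitely generated graded module over a polynomial ring in $c$ variables, and its support defines the cohomological support variety of $M$. Iterating the operation of cutting by a cohomology operator produces, inside $\thick M$, a small object whose support is exactly $\supp_R M$, witnessing $M$ as proxy small.

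For the backward direction $(2)\Rightarrow(1)$, I would apply the hypothesis to the residue field $k$. Since $\supp_R k = \{\m\}$, the remark following the definition of proxy small (via Hopkins-Neeman) tells us that $k$ being proxy small is equivalent to $\thick k$ containing the Koszul complex $K$ on a minimal generating set of $\m$. The task thus reduces to showing: if $K \in \thick k$, then $R$ is a complete intersection. This is the substantive content of Pollitz's theorem \cite[5.2]{Pol}, and is the main obstacle. Its proof passes through the DG algebra $\RHom_R(k,k)$ and the homotopy Lie algebra of $R$, using that the finite-building relation between $k$ and $K$ imposes a nilpotence constraint that, by Avramov's characterization, forces $R$ to be a complete intersection. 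An alternative route, developed in \cite{BILP}, works in the relative setting and exploits the action of a polynomial DG algebra on the cohomology of $k$ to reach the same conclusion with different technical input; either approach would complete the proof.
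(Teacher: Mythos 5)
The paper states this as a \textbf{Fact} and supplies no argument of its own beyond citing \cite[5.2]{Pol}, \cite[4.5]{Letz}, and \cite{BILP}, so there is no in-paper proof to match your proposal against. Your overall architecture --- reduce to the local case via Letz's local-to-global theorem, deduce $(1)\Rightarrow(2)$ from the Dwyer--Greenlees--Iyengar cohomology-operator/support argument, and attribute $(2)\Rightarrow(1)$ to Pollitz --- is the correct skeleton and matches what the authors cite.

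The gap is in your reduction for $(2)\Rightarrow(1)$. You propose to apply hypothesis (2) to the residue field $k$, observe that $k$ being proxy small is equivalent to $K \in \thick k$ for $K$ the Koszul complex on a generating set of $\m$, and then claim that the task reduces to showing that $K \in \thick k$ forces $R$ to be a complete intersection. But $K$ lies in $\thick k$ for \emph{every} local ring: each $\h_i(K)$ is annihilated by $\m$, hence a finite direct sum of copies of $k$, and a bounded complex sits in the thick closure of its homology modules. Equivalently, $k$ is \emph{always} proxy small --- the paper says so explicitly in the Introduction when contrasting proxy smallness with finite CI-dimension. Thus applying (2) to $k$ yields no information, and the implication you ascribe to \cite[5.2]{Pol} would assert that every local ring is a complete intersection. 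What Pollitz's theorem actually establishes is the contrapositive: if $R$ is not a complete intersection, one can produce an object of $\Df{R}$ whose thick closure contains no nonzero perfect complex; the witness is not $k$, and the argument runs through the Koszul complex regarded as a DG algebra and non-nilpotence in the homotopy Lie algebra (or, in the present paper's language, through Strategy~\ref{strategy} and Fact~\ref{var}: exhibit $M$ with $\V_R(R) \not\subseteq \V_R(M)$). Your forward direction and the local-to-global step are fine; the backward direction needs a genuinely different test object.
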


\section{Cohomological support for local rings}\label{section support}

In this section we review the necessary theory of cohomological supports over a local ring; see \cite[Section 2]{Jor}, \cite[Section 3]{Pol} and \cite[Sections 4 \& 5]{Pol2} for further details.  This theory offers a method to detect thick subcategory containment, an idea that goes back to \cite{H,N}. In particular, its utility is in showing that an object cannot be proxy small.    

The theory of cohomological support utilized in the present article originated  in the pioneering work of Avramov \cite{VPD} and in his collaboration with  Buchweitz \cite{SV}; these supports were defined over complete intersections and were  successfully linked  to  cohomological information  ultimately revealing remarkable symmetries in the asymptotic information of Ext and Tor modules over a complete intersection. The varieties were later extended and studied outside of the realm of complete intersections in \cite{AIRestricting, BW, Jor,Pol,Pol2}. As it was shown in \cite{Pol2}, these theories of supports are all recovered by the cohomological support in \cite{Pol2}. For this article, we take the definition from \cite{Jor} (or more generally, the one from \cite{AIRestricting}) while exploiting some of the properties from \cite{Pol,Pol2} (see Fact \ref{var}). 

We fix once and for all a local ring $R$ along with a minimal Cohen presentation 
\[
\widehat{R}\cong Q/I,
\] 
so $(Q,\m,k)$ is a regular local ring and $I \subseteq \m^2$. 

\begin{definition}[See \cite{Jor,AIRestricting}]
We define $\V_R$ to be the vector space $I/\m I$.  
For a finitely generated $R$-module $M \neq 0$, we define the \emph{cohomological support of $M$} to be
\[
\V_R(M):= \left\{ [f] \in \V_R\ \left|\right.\  \pd_{Q/f} \widehat{M} = \infty \text{ or }  [f] = 0 \right\}.
\]
By convention, $\V_R(0)$ is empty.
\end{definition}

The vector space $\V_R$ is also known as $\pi_2(R)$, or, in older literature, as $\V_2(R)$ (cf.~\cite{VPD}). Note that $\V_R$ is intrinsic to $R$, and does not depend on our choice of a minimal Cohen presentation, since in fact the definition of cohomological support we are using coincides with that of \cite[5.2.4]{Pol2}, which is independent of the choice of Cohen presentation \cite[5.1.3]{Pol2}.

\begin{remark}
We briefly indicate an alternative perspective on the cohomological support, explained in more detail in \cite{Pol,Pol2}. Let $\S = k[\V_R]$ be the graded ring of polynomials on $\V_R$, generated by $(\V_R)^*$ in degree $2$. By definition, $\V_R$ identifies with the set of $k$-points in $\Spec\S$. Let $E={\rm Kos}^Q(\f)$ be the Koszul complex on some minimal generating set $\f$ for $I$, regarded as a dg $Q$-algebra in the usual way. 
By \cite[2.4]{CD2}, there is a natural inclusion $\S\subseteq \Ext_E(k,k)$ 
making $\Ext_E(k,k)$ a flat, module-finite $\S$-algebra;  when $\f$ is a $Q$-regular sequence $\S$ agrees with the cohomology operators of Gulliksen \cite{G} and Eisenbud \cite{Eis}, up to sign \cite{AS}.
In any case, it follows that $\Ext_E(\widehat{M},k)$ is a finitely generated graded $\S$-module for each $M$ in $\Df{R}$ \cite[3.2.4]{Pol}. Finally, the set $\V_R(M)$ defined above can be identified with the $k$-points of the reduced subscheme $\Supp_\S \Ext_E(\widehat{M},k) \subseteq \Spec \S$ \cite[5.2.4]{Pol2}. From this we deduce that $\V_R(M)$ is a Zariski closed, conical subset of $\V_R$.
\end{remark}

We will explicitly use the \emph{vector space} structure of $\V_R$, so our definition of cohomological support is the most convenient in this context. 

The invariant $\V_R(R)$ is interesting in its own right; besides detecting the complete intersection property, as we note below, it contains more information about the structure of $R$ in general.

\begin{fact}\label{var}
We recall two facts regarding these cohomological supports (see \cite[3.3.2]{Pol}).
\begin{enumerate}
\item \label{var1} If $M$ is a proxy small object of $\Df{R}$, then $\V_R(R) \subseteq \V_R(M)$. 
\item \label{var2} $\V_R(R)=0$ if and only if $R$ is a complete intersection.
\end{enumerate}
\end{fact}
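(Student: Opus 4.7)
The plan is to treat (2) as essentially classical and devote the main effort to (1), which follows from monotonicity of $\V_R$ under thick closure combined with the fact that cohomological support cannot distinguish $R$ from a nonzero small complex.

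For (2), the forward implication is explicit: if $R$ is a complete intersection then $I$ is generated by a $Q$-regular sequence, and every minimal generating set of $I$ is automatically regular (a standard Nakayama/height argument). For any $0 \neq [f] \in I/\m I$, extending $f$ to a minimal generating set $f, g_2, \dots, g_c$ of $I$ realizes $\widehat{R} = (Q/f)/(g_2, \dots, g_c)$ as a quotient by a $Q/f$-regular sequence, so $\pd_{Q/f} \widehat{R} = c-1 < \infty$ and $[f] \notin \V_R(R)$. The reverse implication, $\V_R(R) = 0 \Rightarrow R$ is a complete intersection, is the deeper classical direction; I would either invoke it from the Avramov--Gulliksen theory of cohomology operators, or argue inductively by using prime avoidance to choose a minimal generator $f$ of $I$ that is also $Q$-regular, and then reduce to the quotient $Q/f$.

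For (1), the first task is to verify that $\V_R$ satisfies the usual dimension-function axioms---invariance under shifts and direct summands, compatibility with direct sums, and subadditivity on cones---either directly from the description via $\pd_{Q/f} \widehat{M}$ together with long exact $\Tor^{Q/f}(-, k)$ sequences on triangles, or equivalently through the identification $\V_R(M) = \Supp_\S \Ext_E(\widehat{M}, k)$ from the remark above, which sends triangles in $\Df{R}$ to long exact sequences of graded $\S$-modules. A formal consequence is monotonicity: if $N \in \thick M$ then $\V_R(N) \subseteq \V_R(M)$.

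The remaining step, which I expect to be the main obstacle, is to show $\V_R(P) = \V_R(R)$ for any nonzero small $P$. The inclusion $\V_R(P) \subseteq \V_R(R)$ is immediate from $P \in \thick R$ and monotonicity. For the reverse I would use the $\S$-module perspective: since $\widehat{P}$ is a nonzero perfect $\widehat{R}$-complex, tensor-hom adjunction gives $\RHom_E(\widehat{P}, k) \simeq \widehat{P}^\vee \ot_{\widehat{R}} \RHom_E(\widehat{R}, k)$, and tensoring a nonzero finitely generated $\Ext_E(\widehat{R}, k)$-module with a nonzero perfect $\widehat{R}$-complex over $\widehat{R}$ preserves its $\S$-support---the result is nonzero and remains finitely generated over the finite $\S$-algebra $\Ext_E(\widehat{R}, k)$. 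Combining this with monotonicity: for any proxy small $M$ with small witness $P \in \thick M$ satisfying $\supp_R P = \supp_R M$, one obtains $\V_R(R) = \V_R(P) \subseteq \V_R(M)$, as required.
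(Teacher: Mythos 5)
The paper does not prove Fact~\ref{var}; it is recalled from \cite[3.3.2]{Pol}, so there is no in-paper proof to compare against. Judged on its own terms, your argument for part~(\ref{var1}) is the right one and almost certainly matches the route in the cited reference: monotonicity of $\V_R$ under thick-subcategory containment, plus the lemma that every nonzero small complex $P$ satisfies $\V_R(P)=\V_R(R)$. The inclusion $\V_R(P)\subseteq\V_R(R)$ is monotonicity; for the reverse your adjunction $\RHom_E(\widehat P,k)\simeq \widehat P^\vee\ot_{\widehat R}\RHom_E(\widehat R,k)$ is correct (restriction along $E\to\widehat R$ has coinduction as right adjoint, and $\widehat P$ perfect turns $\RHom_{\widehat R}$ into $\widehat P^\vee\ot_{\widehat R}$), and since $\Ext_E(\widehat R,k)$ is killed by $\m_{\widehat R}$ one gets $\Ext_E(\widehat P,k)\cong\Tor^{\widehat R}(\widehat P^\vee,k)\otimes_k\Ext_E(\widehat R,k)$ with $\S$ acting through the second factor, which is nonzero, so the $\S$-supports agree. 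Note that for the final deduction you only need $P\neq 0$, which follows from $\supp_R P=\supp_R M\neq\emptyset$; the full support-matching hypothesis in the definition of proxy small is not otherwise used.

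On part~(\ref{var2}), the forward direction is fine. The reverse direction is the genuinely deep one, and your proposed inductive fallback via prime avoidance would not go through: choosing a $Q$-regular $f\in I\smallsetminus\m I$ and knowing $\pd_{Q/f}\widehat R<\infty$ does \emph{not} say that $I/(f)$ is generated by a $(Q/f)$-regular sequence (finite projective dimension of a cyclic module over a local ring does not force the defining ideal to be a complete intersection ideal), and since $Q/f$ is no longer regular there is no clean inductive hypothesis to invoke. The statement really does require the full strength of the hypothesis---finite projective dimension over $Q/f$ for \emph{every} nonzero $[f]\in I/\m I$---together with the Avramov--Gulliksen machinery, as in the cited source. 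So that half of (\ref{var2}) should be regarded as invoked, not reproved.
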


\begin{strategy}\label{strategy}
Recall that our primary goal is to show that if $R$ is not a complete intersection then there are finitely generated $R$-modules that are not proxy small, which we will ultimately do in Theorem \ref{t:characterization}. We isolate, and slightly modify, the strategy from \cite[5.2]{Pol}. Our goal in the present paper is to provide an explicit list of  \emph{finitely generated $R$-modules} $M_1,M_2,\ldots,M_t$ satisfying \[\V_R(R)\not\subseteq \V_R(M_1)\cap \V_R(M_2)\cap \ldots \cap \V_R(M_t)\] provided that  $R$ is not a complete intersection. When  such modules $M_1,M_2,\ldots, M_t$ exist, Fact \ref{var} implies at least one $M_i$ fails to be proxy small. 
\end{strategy}

In the next result, and below, we will need some notation:

\begin{notation}
Let $R$ be a local ring with residue field $k$. For any local homomorphism $\varphi\colon R\to S$, there is an induced map of $k$-vector spaces
\[
\V_\varphi \colon \V_R \longrightarrow \V_S,
\]
constructed as follows. Completing if necessary, one can choose Cohen presentations $R=Q/I$ and $S=Q'/J$ and a compatible lift $\widetilde{\varphi}\colon (Q,\m)\to (Q',\m')$ of $\varphi$, see \cite{AvramovFoxbyHerzog}. In particular $\widetilde{\varphi}(I)\subseteq J$, so there is an induced map of $k$-vector spaces
\[
I/\m I\longrightarrow J/\m'J,
\]
which we denote by $\V_\varphi$.
\end{notation}

We now prove a key lemma, which gives us an explicit formula for the cohomological support in a very specific but important case.

\begin{lemma}
\label{l:ben}
Let $(Q,\m)\to (Q',\m')$ be a finite flat extension of regular local rings such that $\m Q'=\m'$, inducing a map $\varphi\colon R=Q/I \to S=Q'/J$. If $J$ is generated by a $Q'$-regular sequence, then $\V_R(S)=\ker (\V_\varphi)$.
\end{lemma}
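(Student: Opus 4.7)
The plan is to compute $\V_R(S)$ directly from the definition: for each $[f] \in I/\m I$ with $[f] \neq 0$, I will determine when $\pd_{Q/f}(\hat S) = \infty$.

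First I would reduce the computation to the $Q'$-side. Since $Q \to Q'$ is finite flat with $\m Q' = \m'$, the induced extension $Q/f \to Q'/f$ is also finite flat with $(\m/f)(Q'/f) = \m'/f$, making $Q'/f$ a finitely generated free $Q/f$-module. Flat base change for $\Tor$ then gives $\Tor^{Q/f}_n(S,k) \cong \Tor^{Q'/f}_n(S,k')$ for all $n$, and hence $\pd_{Q/f}(\hat S) = \pd_{Q'/f}(\hat S)$. Moreover, the lift $Q \hookrightarrow Q'$ of $\varphi$ identifies $\V_\varphi([f])$ with the class of $f$ in $J/\m' J$, so $[f] \in \ker(\V_\varphi)$ is equivalent to $f \in \m' J$. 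The task thus reduces to showing $\pd_{Q'/f}(S) = \infty$ if and only if $f \in \m' J$ (under $[f] \neq 0$ in $I/\m I$).

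The easy direction is $f \notin \m' J$: then $f$ extends to a minimal generating set of $J$, giving a $Q'$-regular sequence $f, g_2, \ldots, g_c$. Reducing modulo $f$, the images $\bar g_2, \ldots, \bar g_c$ remain a regular sequence in $Q'/f$ with quotient $S$, so the Koszul complex resolves $S$ over $Q'/f$ and $\pd_{Q'/f}(S) = c - 1 < \infty$. Hence $[f] \notin \V_R(S)$.

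The main obstacle is the converse: when $f \in \m' J$, I need to show $\pd_{Q'/f}(S) = \infty$. My plan is to invoke the Shamash/Tate construction. Writing $f = \sum a_i g_i$ with $a_i \in \m'$, let $K = \Kos^{Q'}(g_1, \ldots, g_c)$ be the minimal $Q'$-free resolution of $S$. The element $\eta = \sum a_i e_i \in K_1$ satisfies $d\eta = f$ and lies in $\m' K$. Using the exterior algebra structure of $K$, the operator $\sigma = \eta \wedge (-)$ is a null-homotopy of multiplication by $f$, satisfies $\sigma^2 = 0$ (since $\eta \wedge \eta = 0$), and has image contained in $\m' K$. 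The Shamash resolution $F = K \otimes_{Q'} (Q'/f)\langle y \rangle$, with $y$ a divided-power variable of degree $2$ and differential $d_K + \sigma y$, is then a $Q'/f$-free resolution of $S$; the containment $\sigma(K) \subseteq \m' K$ combined with the minimality of the Koszul differential makes $F$ minimal. Its Poincar\'e series equals $(1+t)^c/(1-t^2)$, which has infinitely many nonzero coefficients for $c \geq 1$. Therefore $\pd_{Q'/f}(S) = \infty$ and $[f] \in \V_R(S)$. Combining both cases yields $\V_R(S) = \ker(\V_\varphi)$.
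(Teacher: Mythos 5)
Your argument is correct and follows essentially the same strategy as the paper's: for $f\in J\smallsetminus\m' J$ use the Koszul resolution over $Q'/f$, and for $f\in\m' J$ use the Tate/Shamash extension of the Koszul complex by a degree-two divided-power variable, observing that the resulting resolution is minimal because the null-homotopy $\eta\wedge(-)$ has image in $\m' K$. The only cosmetic difference is in how you pass between $Q/f$ and $Q'/f$: you invoke flat base change for $\Tor$ once up front to get $\pd_{Q/f}(S)=\pd_{Q'/f}(S)$, whereas the paper simply notes that a free (resp.\ minimal free) resolution over $Q'/f$ is also one over $Q/f$ because $Q'/f$ is $Q/f$-free and $\m Q'=\m'$; these are interchangeable.
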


\begin{proof}
Unraveling the notation, the claim is that for any $f \in I$, we have
\[
\pd_{Q/f}(Q'/J)=\infty \text{ if and only if } f \in \m J=\m' J \textrm{ in } Q'.
\]
If $f\in J\smallsetminus \m' J$, then it forms part of a regular sequence $f,g_2,...,g_m$ generating $J$, and the Koszul complex 
\[
A=\Kos^{Q'/f}(g_2, \ldots, g_m)=Q'/f\langle x_2,\ldots, x_m \mid \del x_i=g_i\rangle
\]
is a finite free resolution of $Q'/J$ over $Q'/f$ (the latter notation is explained in \cite{GL} or \cite[Section 6]{IFR}, for example). But $Q'/f$ is free over $Q/f$, so this is also a finite free resolution over $Q/f$, and the forward implication holds.

If on the other hand $f \in \m'J$, then we may write $f=\sum a_i g_i$ with $a_i \in \m'$ and $g_1,..., g_n$ a regular sequence generating $J$. We can then form the Tate model
\[
{\rm Kos}^{Q'/f}(g_1,...,g_m)\langle y \rangle = A\langle y\mid \del (y) = {\textstyle \sum a_ix_i}\rangle,
\]
where the $x_i$ are the degree one Koszul variables with $\partial(x_i)=g_i$ . By \cite[Theorem 4]{Tate}, a minimal free resolution of $Q'/J$ over $Q'/f$ is  \[{\rm Kos}^{Q'/f}(g_1,...,g_m)\langle y \rangle\xra{\simeq} Q'/J\,.\] Finally, as $\m Q'=\m'$ it is also minimal as a complex of free $Q/f$ modules, we conclude that $\pd_{Q/f}(Q/J)=\infty$.
\end{proof}

\begin{remark}
\label{r:ben}
In particular, if $J$ is an ideal of $Q$ generated by a regular sequence and $I\subseteq J$, then 
$$\V_R(Q/J)=\ker \left( I/\m I \to J / \m J\right).$$
\end{remark}

\section{Main result}\label{section main result}

\begin{lemma}\label{l:primeavoidance}
Let $S=k[x_1,\ldots, x_e]$ be a standard graded polynomial ring over an infinite field $k$. For a homogeneous ideal $I$ of $S$, if $h$ is a homogeneous generator of $I$ of minimal degree, then there exists a regular sequence of linear forms $\bm{\ell}=\ell_2,\ldots, \ell_e$ in $S$ such that $h$ is a nonzero element of minimal degree in $I(S/\bm{\ell}).$
\end{lemma}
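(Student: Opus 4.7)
The plan is to reduce everything to a geometric non-vanishing statement on $h$ and then use that the base field is infinite. Let $d=\deg(h)$. First I would observe that since $h$ has minimal degree among the minimal generators of $I$, every nonzero homogeneous element of $I$ has degree $\ge d$. Consequently, for \emph{any} ideal $\mathfrak{a}\subseteq S$, every nonzero homogeneous element of $(I+\mathfrak{a})/\mathfrak{a}=I(S/\mathfrak{a})$ has degree $\ge d$. Applying this to $\mathfrak{a}=(\bm{\ell})$, the condition that $\bar h$ be a nonzero element of minimal degree in $I(S/\bm{\ell})$ collapses simply to requiring $\bar h\ne 0$ in $S/\bm{\ell}$, i.e. $h\notin (\ell_2,\ldots,\ell_e)$.

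Next, I would interpret the choice of a regular sequence of $e-1$ linear forms dually. Identify $S_1$ with the dual $(k^e)^*$ of $V=k^e$. Any $e-1$ linearly independent linear forms $\ell_2,\ldots,\ell_e$ can, after a change of variables, be taken to be $x_2,\ldots,x_e$, and hence automatically form a regular sequence on the polynomial ring $S$. Such a choice is equivalent to choosing a $1$-dimensional subspace $L=\ker\ell_2\cap\cdots\cap\ker\ell_e\subseteq V$. Under the isomorphism $S/(\ell_2,\ldots,\ell_e)\cong \mathrm{Sym}(V^*/\mathrm{span}(\ell_2,\ldots,\ell_e))\cong k[\bar\ell_1]$, the image $\bar h$ is identified, up to scalar, with the restriction $h|_L$ of $h$ (viewed as a polynomial function on $V$) to the line $L$.

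So the problem becomes: find a line $L\subseteq V$ on which $h$ does not vanish identically. Since $h$ is homogeneous of positive degree, $h|_{kv}$ is nonzero if and only if $h(v)\ne 0$; hence it suffices to produce a single $v\in k^e$ with $h(v)\ne 0$. This is where the hypothesis that $k$ is infinite enters: the zero locus $\V(h)$ is a proper Zariski-closed subset of $\mathbb{A}^e_k$ (because $h\ne 0$ in $I$), and over an infinite field a proper Zariski-closed subset of affine space cannot equal all $k$-points. Choose any such $v$, take $L=kv$, and let $\ell_2,\ldots,\ell_e$ be any basis of the annihilator $L^\perp\subseteq S_1$.

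There is no real obstacle here beyond a careful bookkeeping of the dualization between "cutting down by linear forms" and "restricting to a linear subspace"; the only essential input is the infinitude of $k$, used precisely to find a $k$-rational point outside $\V(h)$. One could alternatively phrase the argument without explicit duality by invoking homogeneous prime avoidance applied to the associated primes of $h$ in $S$, which is presumably why the lemma is labeled as a prime-avoidance statement.
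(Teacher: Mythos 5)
Your proof is correct. The overall structure mirrors the paper's: first observe that since $I$ has no nonzero homogeneous elements of degree less than $d=\deg h$, the same holds for $I(S/\bm{\ell})$, so the "minimal degree" requirement collapses to $\bar h\neq 0$; then produce linear forms $\bm{\ell}$ with $h\notin(\bm{\ell})$. Where you differ is in the second step. The paper invokes \cite[1.5.17]{BH} to obtain a degree-one system of parameters $\g$ for $S/h$ (using that $k$ is infinite), lifts it to $\bm{\ell}$, and deduces $h\notin(\bm{\ell})$ because $h,\bm{\ell}$ is a homogeneous system of parameters for $S$. You instead make this step elementary and geometric: dualize to reduce to finding a line $L\subseteq k^e$ on which $h$ does not vanish identically, which by homogeneity amounts to finding a single point $v$ with $h(v)\neq 0$, available since $k$ is infinite. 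This is a self-contained unwinding of what the Bruns--Herzog reference packages; it buys a shorter, citation-free argument at the cost of having to set up the duality between quotienting by linear forms and restricting to a linear subspace. Both uses of the infinitude of $k$ are essentially the same prime-avoidance phenomenon, as you note.
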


\begin{proof}
As $h$ is homogeneous, $S/h$ is a standard graded $k$-algebra of dimension $e-1$. By \cite[1.5.17]{BH}, since $k$ is infinite there exists an algebraically independent system of parameters $\g=g_2,\ldots, g_e$ for $S/h$ such that each $g_i$ has degree 1. Let $\ell_i$ be a lift of $g_i$ back to a linear form of $S$. Since $h,\bm{\ell}$ is a homogeneous system of parameters for $S$, it follows that $h$ is nonzero in $S/\bm{\ell}.$ The only thing left to remark is that since $\bm{\ell}$ consists of linear forms (in fact, homogeneous is enough), the image of $h$ still has minimal degree among homogeneous elements of $I (S/\bm{\ell}).$
\end{proof}

The next lemma establishes the existence of certain complete intersection quotients that are defined by exactly one of the defining relations for the given local ring. Ultimately, these quotients are the ones that will serve as the sought after test modules, provided that $R$ has enough of them. 

\begin{lemma}\label{l:testmodule}
Let $R$ be a local ring with fixed minimal Cohen presentation $\widehat{R}=Q/I$, where $(Q,\m,k)$ is regular and $k$ is infinite. For any $f\in I\smallsetminus \m I$ with minimal $\m$-adic order among elements of $I$, there exists a singular artinian hypersurface $S$ which is a quotient of $R$, say $R \twoheadrightarrow Q/J \cong S$, such that $f\in J\smallsetminus \m J$.
\end{lemma}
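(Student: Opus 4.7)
The plan is to construct $J\supseteq I$ by cutting $Q$ down to a DVR via a carefully chosen linear regular sequence $\ell_2,\ldots,\ell_e$ and then checking that the image of $f$ generates the image of $I$ in the resulting quotient. Writing $v:=\mathrm{ord}_\m(f)$, the hypothesis says $v$ is the minimum $\m$-adic order among elements of $I$, and minimality of the Cohen presentation ($I\subseteq\m^2$) forces $v\geq 2$.

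The main step is to pass to the associated graded ring $\gr_\m(Q)=k[x_1,\ldots,x_e]$, where $e=\dim Q$, and consider the leading form ideal $I^*=\gr_\m(I)$. The leading form $f^*\in I^*$ is a homogeneous generator of minimal degree $v$, so Lemma \ref{l:primeavoidance} applied to $(I^*,f^*)$ produces a regular sequence of linear forms $\overline{\ell}_2,\ldots,\overline{\ell}_e\in\gr_\m(Q)$ such that $f^*$ remains a minimum-degree nonzero element of the image of $I^*$ in $\gr_\m(Q)/(\overline{\bm{\ell}})$. Lifting each $\overline{\ell}_i$ to a linear form $\ell_i\in\m\setminus\m^2$ yields elements that are linearly independent in $\m/\m^2$, hence extend to a regular system of parameters for $Q$; in particular $Q':=Q/(\bm{\ell})$ is a DVR with maximal ideal $\m':=\m Q'$ and some uniformizer $t$.

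The crucial ingredient is the standard isomorphism $\gr_\m(Q)/(\overline{\bm{\ell}})\cong\gr_{\m'}(Q')$, which holds because $\bm{\ell}$ is a linear regular sequence (equivalently, $\overline{\bm{\ell}}$ is already a regular sequence in the polynomial ring $\gr_\m(Q)$). Under this isomorphism, the non-vanishing of the image of $f^*$ translates to the statement that $\overline{f}\in Q'$ has $\m'$-adic order exactly $v$; in particular $\overline{f}=ut^v$ for some unit $u$. Since every $g\in I$ has $\m$-adic order $\geq v$, its image $\overline{g}$ lies in $(t^v)$, so $IQ'\subseteq(t^v)$, and this is an equality because $\overline{f}$ already generates $(t^v)$. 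Setting $J:=I+(\bm{\ell})$ therefore gives $Q/J\cong Q'/(t^v)$, a singular artinian hypersurface of length $v\geq 2$; since $Q/J$ is complete, the composition $R\to\widehat R=Q/I\twoheadrightarrow Q/J$ is in fact a surjection $R\twoheadrightarrow Q/J$, as required.

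For the remaining condition $f\notin\m J$, I would argue by contradiction: if $f=m+\sum a_i\ell_i$ with $m\in\m I$ and $a_i\in\m$, then reducing modulo $(\bm{\ell})$ gives $\overline{f}\in\m'\cdot (IQ')=\m'\cdot(t^v)=(t^{v+1})$, contradicting $\mathrm{ord}_{\m'}(\overline{f})=v$. The main point that needs care is the identification $\gr_\m(Q)/(\overline{\bm{\ell}})\cong\gr_{\m'}(Q')$; without it one only has $\mathrm{ord}_{\m'}(\overline{f})\leq v$, which is not enough to pin down the length of $Q/J$ or to derive the final contradiction. Everything else amounts to bookkeeping around Lemma \ref{l:primeavoidance}.
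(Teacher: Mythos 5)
Your proof is correct and takes essentially the same route as the paper's: pass to the associated graded ring, invoke Lemma~\ref{l:primeavoidance}, lift the linear forms, use the graded isomorphism $\gr Q/(\overline{\bm{\ell}})\cong\gr_{\m'}(Q')$ to pin down $\mathrm{ord}_{\m'}(\overline{f})$, and then set $J$ to be (equivalently) $I+(\bm\ell)$ or $(f)+(\bm\ell)$. You are actually slightly more explicit than the paper in the final step, spelling out the contradiction argument for $f\notin\m J$ rather than simply asserting that the $e$ generators of the $\m$-primary ideal $J$ form a minimal generating set.
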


\begin{proof}
Let $e$ be the embedding of $R$, that is, the Krull dimension of $Q$.  Fix a $Q$-regular sequence $\x=x_1,\ldots, x_e$ generating $\m$. As $\x$ is $Q$-regular, the associated graded ring of $Q$
\[
\gr Q\coloneqq \bigoplus_{i=0}^\infty \m^i/\m^{i+1},
\] is a standard graded polynomial ring on $\gr x_1,\ldots ,\gr x_e$, the image of $\x$ in $\gr Q$, over $k$. 

Applying Lemma \ref{l:primeavoidance} with $h=\gr f$, we obtain a regular sequence of linear forms $\bm{\ell}=\ell_2,\ldots, \ell_e$ in $\gr Q$ such that the image of $\gr f$ is nonzero and has minimal degree in $\gr Q/(\bm{\ell})$ among all    elements of $\gr I.$ The sequence $\bm{\ell}$ determines a sequence $\y=y_2,\ldots,y_e $ in $Q$ such that the image of $\y$ is linearly independent in $\m/\m^2$ and $\gr y_i=\ell_i$ for each $i$. 

Let $\ov{( \ \ )}$ denote reduction modulo $(\y)$. Since $Q$ is a regular local ring and $\y$ is a regular sequence in $\m/\m^2$, there is an isomorphism of graded $k$-algebras 
\begin{equation}\label{e:gradediso}
\gr Q/(\ell)\cong \gr (\ov{Q})
\end{equation}  such that the degree of $\gr(\ov{f})$ is exactly the $\ov{\m}$-adic order of $\ov{f}$. By the assumptions on the image of $\gr f$ in $\gr Q/(\ell)$ and the isomorphism of graded $k$-algebras in (\ref{e:gradediso}), it follows that $\ov{f}$ is a nonzero element of $\ov{Q}$ that has minimal degree among elements of $I\ov{Q}.$ However, $\ov{Q}$ is  a DVR and so $(\ov{f})=I\ov{Q}.$ 

So, setting $J=(f,y_2,\ldots, y_e)$, we have shown that $J$ contains $I$ and has $f$ as a minimal generator, and also that $Q/J$ is an artinian hypersurface. Furthermore, as $f$ was part of a defining system for a minimal Cohen presentation, the $\m$-adic order of $f$ is at least two; thus, $Q/J$ is non-regular. 

Finally, since $Q/J$ is artinian, the composition $R\to \widehat{R}\to Q/J$ is surjective.
\end{proof}

    Fix a local ring $R$ with minimal Cohen presentation $(Q,\m,k)\xra{\pi} \widehat{R}$ and let $I$ denote $\ker \pi$. In Lemma \ref{l:testmodule}, we showed the existence of artinian hypersurfaces that are quotients of $R$ defined by a \emph{minimal} relation of $\widehat{R}$. 
    When $R$ has ``enough" of these hypersurface quotients, we can appeal to Strategy \ref{strategy} with these quotients serving as our list of test modules. 
    The condition below guarantees that $R$ has ``enough" of the quotients from Lemma \ref{l:testmodule}.

\begin{definition}\label{definition large support}
We say that a local $R$ with minimal Cohen presentation \[ \widehat{R}\cong (Q,\m,k)/I\] has \emph{large enough cohomological support} provided that
\begin{equation}\label{e:technical} 
\dim_k \left( \dfrac{\m^{d+1}\cap I}{\m I} \right) < \dim_k \left( {\rm span}\V_R(R) \right),
\end{equation} 
where $d$ denotes the order of $I$, meaning the minimal $\m$-adic order of an element of $I$ in the regular local ring $Q$.
\end{definition}

\begin{remark}\label{r:enoughpstestmodules} 
If $R$ has large enough cohomological support, then it is not a complete intersection (see \ref{var}(\ref{var2})). The defining condition (\ref{e:technical}) means that there exists a minimal generating set
\[
\{f_1,\ldots, f_c,f_{c+1},\ldots,f_n\}
\]
for the ideal $I$, where every $k$-linear combination of $f_1,\ldots, f_c$ has minimal $\m$-adic order among elements of $I$, and
\[
n-c<\dim_k ({\rm span} \V_R(R)).
\]
Recall that by \cite{Pol}, $\V_R(R)$ is zero whenever $R$ is a complete intersection.
\end{remark}

As we will see, if $R$ has large enough cohomological support $\V_R(R)$, that does not mean that $\V_R(R)$ is necessarily a large set, rather that it's ``large enough" for us to be able to find the modules we are looking for. There are two extremes among non-complete intersections which are easily seen to satisfy this condition, as we will see in Example \ref{exampleequipresented} and Example \ref{examplemaximalsupport}.

\begin{example}[Equipresented rings]\label{exampleequipresented}
Suppose that a minimal Cohen presentation $Q\xra{\pi} \widehat{R}$ of $R$ is such that every minimal generator of $I = \ker \pi$ has the same $\m$-adic order; we say such a ring is \emph{equipresented}. If $R$ is not a complete intersection, then (\ref{e:technical}) is satisfied trivially. These include:
\begin{enumerate}
        \item Short Gorenstein rings (see Example \ref{exampleshortgor}), Veronese rings of polynomial rings, and indeed all Koszul algebras.
        \item More generally, any quadratic ring.
        \item The truncated rings $Q/\m^d$ (see Example \ref{example truncated}).
        \item Generic determinantal rings.
    \end{enumerate}
\end{example}

\begin{example}[Rings with spanning support]\label{examplemaximalsupport}
Let $n$ denote the minimal number of generators for a defining ideal $I$ of $\widehat{R}$ in a minimal Cohen presentation of $(Q,\m)\xra{\pi} \widehat{R}$.
Assume $R$ is not a complete intersection and that it satisfies $\dim_k ({\rm span} \V_R(R))=n$. Such rings are said to have \emph{spanning support}.

If $d$ denotes the minimal $\m$-adic order of an element in $I$, then
\[
\dim_k \left( \dfrac{\m^{d+1}\cap I}{\m I} \right)<n.\]
Hence, (\ref{e:technical}) is trivially satisfied for rings with spanning support. Here are some examples of rings with spanning support: 
    \begin{enumerate}
        \item Short Gorenstein rings (see Example \ref{exampleshortgor}). 
        \item \label{examplesmallcodepth} By \cite[5.3.6]{Pol2}, if $R$ is not a complete intersection and $\dim Q-\depth R\leqslant 3$, then 
        \[
        \V_R(R)=\V_R
        \] 
        except when $Q$ admits an embedded deformation, meaning that $\widehat{R} \cong P/(f)$ for some local ring $P$ and some $P$-regular element $f$. So the generic non-complete intersection which satisfies $\dim Q-\depth R\leqslant 3$ has spanning support, and hence has large enough cohomological support. 
\item Suppose $(Q,\m)\to \widehat{R}$ is a minimal Cohen presentation of $R$ where the minimal free resolution $F\xra{\simeq} \widehat{R}$ admits a DG $Q$-algebra structure. If $F_1F_1\subseteq \m F_2$ then $R$ has spanning support (see the argument in \cite[5.3.3]{Pol2}). Similarily, a direct calculation shows that  if $F_1F_{p-1}\subseteq \m F_p$ where $p=\pd_Q \widehat{R}$, then $R$ again has spanning support. 
    \end{enumerate}
\end{example}

Here is a procedure to construct examples that are not in either of the two classes above, in Examples \ref{exampleequipresented} and \ref{examplemaximalsupport}, yet have large enough cohomological support. 

\begin{example}
    Let $R'$ be a non-complete intersection local $k$-algebra which is equipresented in degree $d$ and let $S$ be a complete intersection $k$-algebra whose defining ideal is generated in degrees strictly greater than $d$. Set 
    \[
    R\coloneqq R'\otimes_k S.
    \] 
    It can easily be checked that $R$ has large enough cohomological support while not falling into the two classes of rings above. 
        
     Furthermore, the tensor product (over $k$) of any two non-complete intersections, each without spanning support and not equipresented, yields a $k$-algebra with not falling into either class above; this is a fairly large class of examples of rings with large enough cohomological support that are not in the extremal cases of Examples \ref{exampleequipresented} and \ref{examplemaximalsupport} above. 
\end{example}

\begin{theorem}\label{t:characterization}
Let $R$ be a commutative noetherian local ring with residue field $k$. If $R$ has large enough support, 
 then there exists a finite length $R$-module that is not proxy small. Moreover, when $k$ is infinite there is a surjective homomorphism $R \twoheadrightarrow S$ such that $S$ is an artinian hypersurface and $S$ is  not a proxy small $R$-module. 
\end{theorem}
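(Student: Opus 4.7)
The plan is to follow Strategy \ref{strategy}: produce a list of surjective images of $R$ whose cohomological supports together fail to contain $\V_R(R)$, then conclude via Fact \ref{var}(1) that at least one of them is not proxy small. The quotients come from applying Lemma \ref{l:testmodule} to each minimal-order generator of $I$, with cohomological supports computed via Remark \ref{r:ben}. For the general statement, a standard reduction to the case of infinite residue field suffices (for instance via the faithfully flat extension $R\to R[t]_{\m R[t]}$), so I assume $k$ is infinite throughout.

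By Remark \ref{r:enoughpstestmodules} fix a minimal generating set $\{f_1,\ldots,f_c,f_{c+1},\ldots,f_n\}$ of $I$ in which $f_1,\ldots,f_c$ each have $\m$-adic order $d$ (and every $k$-linear combination of them has order $d$), while $f_{c+1},\ldots,f_n\in\m^{d+1}$. Then $n-c=\dim_k(\m^{d+1}\cap I)/\m I<\dim_k{\rm span}\,\V_R(R)$ by large support, and $\gr f_1,\ldots,\gr f_c$ form a basis of $\gr^d I\subseteq\gr^d Q$. For each $i\in\{1,\ldots,c\}$, applying Lemma \ref{l:testmodule} to $f_i$ produces a surjection $R\twoheadrightarrow Q/J_i$ onto an artinian hypersurface, with $J_i=(f_i,y_{i,2},\ldots,y_{i,e})$ generated by a regular sequence in $Q$ and $f_i\notin\m J_i$. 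By Remark \ref{r:ben}, $\V_R(Q/J_i)=\ker\pi_i$ for $\pi_i\colon I/\m I\to J_i/\m J_i$.

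The crux is to arrange these choices so that $\bigcap_{i=1}^{c}\ker\pi_i\subseteq(\m^{d+1}\cap I)/\m I$. Once this is established, the large support hypothesis forces $\dim_k\bigcap_i\ker\pi_i\leq n-c<\dim_k{\rm span}\,\V_R(R)$, hence ${\rm span}\,\V_R(R)\not\subseteq\bigcap_i\V_R(Q/J_i)$ and some $Q/J_i$ is not proxy small. For the inclusion, suppose $[v]\in\bigcap_i\ker\pi_i$. For each $i$ write $v=af_i+\sum_jb_jy_{i,j}$ with $a,b_j\in\m$; since $af_i\in\m^{d+1}$ and $\gr(\bm{y_i})$ equals the ideal $(\bm{\ell_i})\subseteq\gr Q$ generated by the initial forms $\bm{\ell_i}\coloneqq\gr\bm{y_i}$, the image $\gr v\in\gr^d I$ lies in the degree-$d$ part of $(\bm{\ell_i})$. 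This is the kernel of a $k$-linear functional $\psi_i\colon\gr^d Q\to\gr^d Q/(\bm{\ell_i})\cong k$, and Lemma \ref{l:primeavoidance} permits us to choose $\bm{\ell_i}$ so that $\psi_i(\gr f_i)\neq 0$.

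The main obstacle then reduces to a linear algebra problem: choose the $\bm{\ell_i}$ so that the restrictions $\psi_i|_{\gr^d I}$ form a basis of $(\gr^d I)^*$. I would proceed by induction, viewing each $\psi_i$ as evaluation at a nonzero covector $v_i\in(\m/\m^2)^*$ whose kernel is $\bm{\ell_i}$. At step $i$, after $v_1,\ldots,v_{i-1}$ have been chosen so that $V_{i-1}\coloneqq\bigcap_{j<i}\ker\psi_j|_{\gr^d I}$ has dimension exactly $c-(i-1)\geq 1$, pick $v_i$ avoiding both proper Zariski closed loci $\{v\colon(\gr f_i)(v)=0\}$ and $\{v\colon g(v)=0\}$ for some fixed nonzero $g\in V_{i-1}$; since $k$ is infinite such a $v_i$ exists. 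Then $\psi_i(\gr f_i)\neq 0$ and $\psi_i|_{V_{i-1}}$ is not identically zero, so $\dim V_i=c-i$. After all $c$ steps $V_c=0$, forcing $\gr v=0$ and hence $v\in\m^{d+1}+\m I$, as required.
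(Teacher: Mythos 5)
Your argument for the case of infinite residue field is correct, and it is a genuinely different organization from the paper's. The paper proceeds adaptively: after constructing $J_1$ from $f_1$, it checks whether some minimal-order generator $g_2$ lies in $\m J_1$; if so, it builds $J_2$ around $g_2$, and so on, producing $t\leqslant c$ ideals whose kernels cut down by construction. Your version instead commits to $f_1,\ldots,f_c$ up front and opens up the proof of Lemma \ref{l:testmodule}: you choose the linear systems of parameters $\bm{\ell_i}$ (equivalently, the covectors $v_i\in(\m/\m^2)^*$) so that the induced functionals $\psi_i$ on $\gr^d I\cong k^c$ are linearly independent, forcing $\bigcap_i\ker\pi_i$ into $(\m^{d+1}\cap I)/\m I$ directly. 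The reduction to a linear algebra problem in $\gr Q$ is clean and transparent; the cost is that you need to verify that the constraints $(\gr f_i)(v_i)\neq 0$ and $g(v_i)\neq 0$ are compatible (which you do, correctly, via prime avoidance over the infinite field). Both routes ultimately rest on Lemma \ref{l:ben} and the choice of parameters in Lemma \ref{l:testmodule}/\ref{l:primeavoidance}.

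There is, however, a genuine gap in your treatment of the case when $k$ is finite. You invoke ``a standard reduction to the case of infinite residue field'' via $R\to R'=R[t]_{\m R[t]}$, but this does not deliver the theorem as stated. The argument over $R'$ produces a finite length \emph{$R'$-module} that is not proxy small over $R'$. Since the residue field extension $k\hookrightarrow k(t)$ is transcendental, $R'$ is very far from being module-finite over $R$; a finite length $R'$-module is not a finitely generated $R$-module, and there is no evident way to descend the failure of proxy smallness to an $R$-module, nor to show that ``every finite length $R$-module is proxy small'' implies the same over $R'$. The paper's proof avoids this by using a flat extension $(Q,\m,k)\to(Q',\m',k')$ with $\m Q'=\m'$ and $k'$ infinite \emph{and algebraic} over $k$, realized (via Bourbaki's gonflement) as a filtered colimit of \emph{finite} flat extensions $Q_i$. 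The defining ideals $J_j'\subseteq Q'$ then descend to ideals $J_j\subseteq Q_i$ for some $i$, and since $Q_i$ is finite over $Q$, each $Q_i/J_j$ is a finite length $R$-module; flatness of $Q_i\to Q'$ then transfers the kernel computation. Without this finiteness, the first assertion of the theorem is not established by your proposal.
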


\begin{proof}
Consider a minimal Cohen presentation $(Q,\m)\xra{\pi} \widehat{R}$ with kernel $I$. Since $R$ has large enough support, by Remark \ref{r:enoughpstestmodules} $I$ is generated by 
\[
\{f_1,\ldots, f_c,f_{c+1},\ldots,f_n\}
\]
 where each $k$-linear combination of $f_1,\ldots, f_c$ has minimal $\m$-adic order among elements of $I$, and
\[
n-c<\dim_k \left({\rm span} \V_R(R)\right).
\]
First we assume that $k$ is infinite. With this set up we build $t \leqslant c$ artinian hypersurface quotients $R \twoheadrightarrow Q/J_i$ for $1 \leqslant i \leqslant t$ such that  
\[
\V_R(Q/J_1)\cap \V_R(Q/J_2)\cap \ldots \cap \V_R(Q/J_t)
\]
is a subspace of codimension at least $c$ in $\V_R$, and thus cannot contain $\V_R(R)$. We will then use Strategy \ref{strategy} to conclude that at least one of these artinian hypersurface quotients cannot be proxy small. 

First, Lemma \ref{l:testmodule} provides an artinian hypersurface $R \twoheadrightarrow Q/J_1$ such that $g_1 := f_1 \in J_1 \smallsetminus\m J_1$. In particular, by Lemma \ref{l:ben}, $\V_R(Q/J_1)$ is a subspace of $I/\m I$ of codimension at least $1$. If possible, pick a minimal generator $g_2 := a_1 f_1 + \cdots + a_c f_c$ of $I$ such that $g_2 \in \m J_1$; note that $g_2$ has minimal $\m$-adic order by construction. If there is no such $g_2$, then $\V_R(Q/J_1)$ is a subspace of codimension at least $c$, and we are done. If such a $g_2$ does exist, then we again use Lemma \ref{l:testmodule} to build an artinian hypersurface quotient $R \twoheadrightarrow Q/J_2$ such that $g_2 \in J_2 \smallsetminus\m J_2$. Note that the codimension of $\V_R(Q/J_1) \cap \V_R(Q/J_2)$ must necessarily increase, by construction, and so in particular it is at least $2$. Proceeding by induction, we build artinian quotients $Q/J_1, \ldots, Q/J_t$ of $R$, $t \leqslant c$, such that $\bigcap_{j=1}^t \V_R(Q/J_j)$ 
is a subspace of $I/\m I$ of codimension at least $c$. From the assumption that $R$ is not a complete intersection and $k$ is infinite, we have constructed a finite length non-proxy small $R$-module. 

Now we deal with the case when $k$ is finite. By \cite[Appendice, \textsection 2]{Bourbaki} (see also \cite[Theorem 10.14]{CMrepsbook}) one can construct a flat extension of regular local rings $(Q,\m,k)\to (Q',\m',k')$ such that $\m Q'=\m'$ and such that $k'$ is infinite. Moreover, choosing $k'$ to be algebraic over $k$, we can do this in such a way that $Q'$ is a colimit of regular local rings $(Q_i,\m_i,k_i)$ each finite and flat over $Q$, each satisfying $\m Q_i=\m_i$.

Since $\m Q'=\m'$, the $\m$-adic order of an element of $Q$ is the same as its $\m'$-adic order in $Q'$. So applying the above argument to $IQ'$ yields a sequence of ideals $J_1',\dots,J_t'$ of $Q'$ such that
\[
\bigcap_{j=1}^t \ker\left( I/\m I \to J_j'/\m'J_j' \right)
\]
has codimension at least $c$ in $\V_R=I/\m I$.

There is an $i$ such that all of the ideals $J_1,\dots,J_t$ are defined over $Q_i$. In other words, we can find ideals $J_{1},\dots,J_{t}$ of $Q_i$ for which $J_{j}Q'=J_j'$. Now applying Lemma \ref{l:ben}, we see that
\[
\bigcap_{j=1}^t \V_R(Q_i/J_i)=\bigcap_{j=1}^t \ker\left( I/\m I \to J_j/\m J_j \right)=\bigcap_{j=1}^t \ker\left( I/\m I \to J_j'/\m'J_j' \right)
\]
has codimension at least $c$ in $\V_R$; the second equality here uses flatness of $Q_i\to Q'$. Finally, we can conclude as above that one of $Q_i/J_i$ must fail to be proxy small as an $R$-module.
\end{proof}

\begin{remark}
The finite length modules that are constructed in Theorem \ref{t:characterization} are shown to exist based on the specified ring theoretic information in condition (\ref{e:technical}); the latter property is on the $\m$-adic order of elements in $I/ \m I$, where $(Q,\m)\to Q/I$ is a minimal Cohen presentation for $R$. Moreover, these finite length modules are shown to exist for any singular local ring $R$ since   $n-c>0$ where $n$ and $c$ are as in Remark \ref{r:enoughpstestmodules}. So this set of modules acts as a list of test modules provided condition (\ref{e:technical}) in Theorem \ref{t:characterization} holds. However, it remains to determine whether one can construct a canonical list of finitely generated modules that detect whether $R$ is a complete intersection as discussed in Strategy \ref{strategy}; it may be worth exploring this idea further.
\end{remark}

\begin{remark}
One natural guess for a test module would be the conormal module $I/I^2$. While proxy small modules seem to govern the complete intersection property, so does the conormal module \cite{Vasconcelos,AvramovHerzog,Ben}. In particular, \cite{Ben} succeeded in showing that smallness of the conormal does in fact chracterize a complete intersection; see also \cite{BI}. However, the conormal module is often proxy small even if $R$ is not a complete intersection: if $R$ admits an embedded deformation, then $I/I^2$ has a free summand \cite{Vasconcelos}, and thus $I/I^2$ is proxy small.
\end{remark}

As a special case of Theorem \ref{t:characterization}, we can completely solve the case of equipresented rings.

\begin{corollary}\label{corep}
For an equipresented local ring $R$ with residue field $k$, the following are equivalent:
\begin{enumerate}
    \item\label{t:ep1} $R$ is a complete intersection;
    \item\label{t:ep2} every finitely generated $R$-module is proxy small;
    \item\label{t:ep3} every finite length $R$-module is proxy small.
\end{enumerate}
If $k$ is infinite, then these are equivalent to:
\begin{enumerate}
  \setcounter{enumi}{3}
    \item\label{t:ep4} for every surjective homomorphism $R \twoheadrightarrow S$ such that $S$ is an artinian hypersurface, $S$ is a proxy small $R$-module.
\end{enumerate} 
\end{corollary}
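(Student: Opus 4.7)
The plan is to deduce the corollary as a direct consequence of Theorem \ref{t:characterization}, using the observation from Example \ref{exampleequipresented} that every non-complete-intersection equipresented ring has large enough cohomological support. The implications will go in a cycle through the three (or four) conditions, with most arrows being formal and only one requiring real input.

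First, for the equivalence of (\ref{t:ep1}), (\ref{t:ep2}), and (\ref{t:ep3}): the implication (\ref{t:ep1}) $\Rightarrow$ (\ref{t:ep2}) is immediate from Fact \ref{c:lci}, since if $R$ is a (local) complete intersection then every object of $\Df{R}$ is proxy small, in particular every finitely generated module. The implication (\ref{t:ep2}) $\Rightarrow$ (\ref{t:ep3}) is a tautology, since finite length modules are finitely generated. For the contrapositive of (\ref{t:ep3}) $\Rightarrow$ (\ref{t:ep1}), suppose $R$ is not a complete intersection. Since $R$ is equipresented, the $\m$-adic order of each minimal generator of $I$ agrees with the minimal order $d$, so that $\m^{d+1}\cap I \subseteq \m I$, forcing the left-hand side of the inequality in Definition \ref{definition large support} to be zero. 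On the other hand, $\V_R(R)$ is nonzero by Fact \ref{var}(\ref{var2}), so the inequality (\ref{e:technical}) holds trivially and $R$ has large enough cohomological support. Theorem \ref{t:characterization} then produces a finite length $R$-module that is not proxy small, completing the cycle.

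For the equivalence with (\ref{t:ep4}) in the infinite residue field case, (\ref{t:ep1}) $\Rightarrow$ (\ref{t:ep4}) follows again from Fact \ref{c:lci}. The reverse direction (\ref{t:ep4}) $\Rightarrow$ (\ref{t:ep1}) follows by the same contrapositive argument, invoking the second conclusion of Theorem \ref{t:characterization}, which gives a surjection $R \twoheadrightarrow S$ onto an artinian hypersurface $S$ that fails to be proxy small whenever $R$ is not a complete intersection and has infinite residue field.

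Since all of the substantive work has been done in Theorem \ref{t:characterization} and its supporting lemmas, there is no real obstacle here; the only step that requires any checking is verifying that the equipresented hypothesis places us within the hypotheses of Theorem \ref{t:characterization}, which, as noted above, reduces to the observation $\m^{d+1}\cap I = \m I$ combined with the nonvanishing of $\V_R(R)$.
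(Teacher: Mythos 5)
Your proposal is correct and follows essentially the same route the paper (implicitly) takes: the corollary is derived as a direct consequence of Theorem \ref{t:characterization} together with the observation in Example \ref{exampleequipresented} that a non-complete-intersection equipresented ring satisfies the ``large enough cohomological support'' condition (\ref{e:technical}), since $\m^{d+1}\cap I\subseteq\m I$ forces the left-hand side to vanish while Fact \ref{var}(\ref{var2}) gives $\dim_k(\mathrm{span}\,\V_R(R))\geqslant 1$. The forward implications via Fact \ref{c:lci} and the trivial inclusion of finite length modules among finitely generated ones are exactly what the paper relies on.
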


In fact, the proof of Theorem \ref{t:characterization} provides an algorithm to find modules that are not proxy small.

\begin{algorithm}\label{alg}
Suppose that $R \cong Q/I$, where $Q$ is a regular ring and $I = (f_1, \ldots, f_n)$ is such that every $k$-combination of $f_1, \ldots, f_n$ has the same $\m$-adic order.

\begin{enumerate}
\item[Step 1] Find $x_2, \ldots, x_e \in \m \smallsetminus \m^2$ regular on $R/(f_1)$, and set $J_1 := (f_1, x_2, \ldots, x_e)$ and $M_1 := R/J_1$. As we have shown in Theorem \ref{t:characterization}, the ideal $J_1$ contains $I$.

\item[Step 2] Compute 
$$K_1 = \ker \left( \dfrac{I}{\m I} \to \dfrac{J_1}{\m J_1}\right).$$ 
Note that this map is well-defined because $I \subseteq J_1$.

\item[Step 3] For a fixed $r \geqslant 1$, suppose we have constructed $J_1, \ldots, J_r$ and $K_1, \ldots, K_r$. Check if there is an equality $K_1 \cap \cdots \cap K_r = 0$; if so, we are done. If not, take $g_{r+1} \in R$ of minimal $\m$-adic order such that $[g_{r+1}] \in K_1 \cap \cdots \cap K_r$. Repeat Step 1 for $g_{r+1}$, that is, find $y_2, \ldots, y_e \in \m \smallsetminus\m^2$ such that $g_{r+1}, y_2, \ldots, y_e$ is a regular sequence. Set $J_{r+1} = (g_{r+1}, y_2, \ldots, y_e)$ and $M_{r+1} = Q/J_{r+1}$. Repeat also step 2, by setting 
$$K_{r+1} := \ker \left( \dfrac{I}{\m I} \to \dfrac{J_{r+1}}{\m J_{r+1}}\right).$$
\end{enumerate}

\noindent In each step, the dimension of the vector space $K_1 \cap \cdots \cap K_r$ goes down by at least 1; therefore, the process stops after at most $n$ steps, since $[f_1] \notin K_1$ and thus $K_1$ has dimension at most $n-1$. Once this process is completed, we are left with $R$-modules $M_1, \ldots, M_{t}$ such that $\V_R(M_1)\cap \ldots \cap \V_R(M_t) = 0$. If $R$ is not a complete intersection, at least one of the $M_i$ cannot be proxy small.

When $Q$ is a polynomial ring over a field $k$ and $R$ is a quotient of $Q$ by some homogeneous ideal $I$, this can be done with the computer algebra system Macaulay2 \cite{M2}. If $f_1$ is a homogeneous generator of minimal degree in $I$, the method \texttt{inhomogeneousSystemOfParameters} from the \texttt{Depth} package will find a linear system of parameters $[x_2], \ldots, [x_e]$ in $R/(f_1)$, and thus $f_1, x_2, \ldots, x_e$ form a regular sequence. Moreover, as shown in Theorem \ref{t:characterization}, $I \subseteq J_1 := (f_1, x_2, \ldots, x_e)$.
\end{algorithm}

We will apply this algorithm in Section \ref{section examples} to compute various examples.

\begin{remark}
    If $I$ is not equigenerated, but still satisfies the hypothesis of Theorem \ref{t:characterization}, a variation of Algorithm \ref{alg} will still produce our candidates for non-proxy small modules. Suppose that $I$ is a homogenous ideal in $Q = k[x_1, \ldots, x_v]$, minimally generated by $n$ elements, and that
$$n - r := \dim_k \left( \dfrac{(x_1, \ldots, x_v)^{d+1} \cap I}{(x_1, \ldots, x_v) I} \right) < \dim_k \left( {\rm span}\V_R(R) \right) := s.$$
Find homogeneous generators $f_1, \ldots, f_n$ for $I$ such that every $k$-combination of $f_1, \ldots, f_r$ has minimal degree in $I$, and such that $f_{r+1}, \ldots, f_n$ have non-minimal degree in $I$. We then run Algorithm \ref{alg} on $(f_1, \ldots, f_r)$,  but rather than checking at each step that $K_1 \cap \cdots \cap K_t = 0$, we check that 
\[
\dim_k \left( K_1 \cap \cdots \cap K_t \right) < s.
\]
We use this more general algorithm in Section \ref{section examples}.
\end{remark}

\vspace{1em}

We now discuss a connection with a definition introduced and investigated by Gheibi, Jorgensen, and Takahashi in \cite{quasipdim}.

\begin{definition}
Let $M$ be an $R$-module. A \emph{quasi-projective resolution} of $M$ is a complex of projective $R$-modules
$$P = \xymatrix{\cdots \ar[r] & P_2 \ar[r] & P_1 \ar[r] & P_0 \ar[r] & 0}$$
such that for each $i \geqslant 0$, $H_i(P) =M^{\oplus r_i}$ for some $r_i \geqslant 0$, not all equal to zero. The module $M$ has \emph{finite quasi-projective dimension} if there exists a quasi-projective resolution of $M$ with $P_i = 0$ for $i \gg 0$.
\end{definition}

\begin{question}\label{q:GJT} (Gheibi--Jorgensen--Takahashi \cite[Question 3.12]{quasipdim}) 
If every finitely generated $R$-module has finite quasi-projective dimension, is $R$ a complete intersection?
\end{question}

In \cite[Corollary 3.8]{quasipdim}, it is shown that if $R$ is  complete intersection, then every finitely generated $R$-module has finite quasi-projective dimension. Furthermore, every module of finite quasi-projective dimension is proxy small, see \cite[Proposition 3.11]{quasipdim}; however, finite quasi-projective dimension is not equivalent to a module being proxy small, as shown in \cite[Example 4.9]{quasipdim}. Regardless, Theorem \ref{t:characterization} answers Question \ref{q:GJT} in the affirmative in the following setting.

\begin{corollary}
Whenever $R$ has large enough cohomological support, then from Theorem \ref{t:characterization}, there exists a finite length $R$-module that has infinite quasi-projective dimension. Moreover, when the residue field is infinite there exists a singular quotient of $R$ which is an artinian hypersurface of infinite quasi-projective dimension over $R$. 
\end{corollary}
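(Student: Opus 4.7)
The plan is to chain together Theorem \ref{t:characterization} with the known implication from \cite[Proposition 3.11]{quasipdim} that finite quasi-projective dimension implies proxy small. Contrapositively, any module that fails to be proxy small must have infinite quasi-projective dimension.

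More concretely, first I would invoke Theorem \ref{t:characterization} to produce a finite length $R$-module $M$ which is not proxy small; this is exactly what the theorem delivers under the large enough cohomological support hypothesis. Then I would apply the contrapositive of \cite[Proposition 3.11]{quasipdim}: since $M$ is not proxy small, $M$ cannot have finite quasi-projective dimension. This gives the first assertion.

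For the second assertion, I would again use Theorem \ref{t:characterization}, but now its strengthened conclusion in the case where the residue field $k$ is infinite: it produces a surjection $R\twoheadrightarrow S$ with $S$ an artinian hypersurface such that $S$ is not proxy small as an $R$-module. Since $S$ is a quotient of $R$ by Lemma \ref{l:testmodule}'s construction, it is also singular. Applying the same contrapositive of \cite[Proposition 3.11]{quasipdim} to $S$ yields that $S$ has infinite quasi-projective dimension over $R$.

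There is essentially no obstacle here; the corollary is a direct translation of Theorem \ref{t:characterization} through the implication \emph{finite quasi-projective dimension} $\Rightarrow$ \emph{proxy small}. The only thing to be careful about is to cite the correct statement from \cite{quasipdim} and to note explicitly that the equivalence between the two notions fails in general (as recorded in \cite[Example 4.9]{quasipdim}), which is why this corollary is phrased in one direction only and provides a partial affirmative answer to Question \ref{q:GJT}.
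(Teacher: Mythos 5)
Your argument is exactly the one the paper intends: take the non-proxy-small finite length module (resp.\ artinian hypersurface quotient when $k$ is infinite) produced by Theorem \ref{t:characterization}, and observe via the contrapositive of \cite[Proposition 3.11]{quasipdim} that it cannot have finite quasi-projective dimension. Nothing more is needed, and the citations are the correct ones.
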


\begin{corollary}
If $R$ is an equipresented local ring, then the following are equivalent:
 \begin{enumerate}
     \item $R$ is a complete intersection; 
     \item every finitely generated $R$-module has finite quasi-projective dimension;
    \item every finite length $R$-module has finite quasi-projective dimension.
    \end{enumerate}
    Moreover, when  $k$ is infinite then these are equivalent to:
\begin{enumerate}
 \setcounter{enumi}{3}
     \item each singular artinian hypersurface which is a quotient of $R$ has finite quasi-projective dimension.
 \end{enumerate}
\end{corollary}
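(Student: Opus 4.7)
The plan is to establish the cycle $(1)\Rightarrow(2)\Rightarrow(3)\Rightarrow(1)$, and then in the infinite residue field case use the implications $(1)\Rightarrow(4)\Rightarrow(1)$ to fold condition (4) into the equivalence. The hard direction, in both cycles, is the one that produces a module of infinite quasi-projective dimension from the failure of the complete intersection property.

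The implication $(1)\Rightarrow(2)$ is exactly \cite[Corollary 3.8]{quasipdim}: over a complete intersection, every finitely generated module has finite quasi-projective dimension. The implication $(2)\Rightarrow(3)$ is immediate, since finite length modules are finitely generated. For $(3)\Rightarrow(1)$ I would argue by contrapositive: suppose $R$ is not a complete intersection. Since $R$ is equipresented, Example \ref{exampleequipresented} shows that $R$ has large enough cohomological support, so Theorem \ref{t:characterization} produces a finite length $R$-module $M$ that is not proxy small. By \cite[Proposition 3.11]{quasipdim}, any module of finite quasi-projective dimension is proxy small, so $M$ must have infinite quasi-projective dimension. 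This contradicts (3), completing the cycle.

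For the addendum assuming $k$ is infinite, I would note that $(1)\Rightarrow(4)$ is a special case of $(1)\Rightarrow(2)$, since each artinian hypersurface quotient is in particular a finitely generated $R$-module. The implication $(4)\Rightarrow(1)$ is handled by the contrapositive in the same way as $(3)\Rightarrow(1)$, except now we invoke the stronger conclusion of Theorem \ref{t:characterization}: under the infinite residue field assumption, the non-proxy small module produced there can be taken to be a singular artinian hypersurface quotient $R\twoheadrightarrow S$. Again by \cite[Proposition 3.11]{quasipdim}, $S$ then has infinite quasi-projective dimension, contradicting (4).

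The only conceptual obstacle is recognizing that the entire argument reduces to combining Theorem \ref{t:characterization} with the one-way relationship \emph{finite quasi-projective dimension $\Rightarrow$ proxy small} from \cite{quasipdim}. The reverse relationship fails in general by \cite[Example 4.9]{quasipdim}, but this does not obstruct our reasoning: we only use the implication in the direction that lets us pass from non-proxy small to infinite quasi-projective dimension. No further calculation is required beyond invoking these two results in the right order.
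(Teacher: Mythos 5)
Your proof is correct and follows exactly the route the paper intends: the hard implications $(3)\Rightarrow(1)$ and $(4)\Rightarrow(1)$ come from combining Theorem~\ref{t:characterization} (via Example~\ref{exampleequipresented}, which shows equipresented non-complete-intersections have large enough cohomological support) with the implication ``finite quasi-projective dimension $\Rightarrow$ proxy small'' from \cite[Proposition~3.11]{quasipdim}, while $(1)\Rightarrow(2)$ is \cite[Corollary~3.8]{quasipdim}. This is precisely the content of the unnumbered corollary immediately preceding the statement, specialized to the equipresented case, so nothing is missing.
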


\section{Examples}\label{section examples}

\begin{example}\label{example truncated}
    Let $Q$ be a regular local ring with maximal ideal $\m = (x_1, \ldots, x_d)$, and consider $R = Q/\m^s$ for some $s \geqslant 2$. As $R$ is an equipresented non-complete intersection, we know from Theorem \ref{t:characterization} that there exists a finite length $R$-module that is not proxy small over $R$. The point of this example is that even without the assumption that the residue field is infinite we can  explicitly construct a single artinian hypersurface quotient of $R$ that is not proxy small. 
  
    Indeed, define the $Q$-module $M$ to be
    \[
    M\coloneqq Q/(x_1^s,x_2,\ldots, x_d).
    \]
 It is evident that 
 \[
    \m^s\subseteq (x_1^s,x_2,\ldots, x_d)
    \]
     and  $M$ is an artinian singular hypersurface. 
    Therefore, by Lemma \ref{l:ben},
    $\V_R(M)$ is a  hyperplane in $\V_R$.
  However,  $\pd_{Q/f}R=\infty$ for any $f\in \m^s$  (see, for example,  \cite[10.3.8]{IFR}) and hence $\V_R(R)=\V_R$. Thus, $M$ is a singular artinian hypersurface quotient of $R$ that is not proxy small over $R$. 
\end{example}

\begin{example}\label{exampleshortgor}
Let $k$ be any field, $e \geqslant 3$, $Q = k \lb x_1,\ldots, x_e \rb$, and let $I$ be the ideal generated by 
\[
\{x_1^2-x_i^2:2 \leqslant i \leqslant e\} \cup \{x_ix_j:1 \leqslant i<j\leqslant e\}.
\]The ring $R=Q/I$ is well-known to be Gorenstein but not a complete intersection and $I$ is minimally generated by the quadratics listed above. As $R$ is an equipresented non-complete intersection, by Corollary \ref{corep} we conclude that there exists a non-proxy small module over $R$. 
In fact, more can be said in this case. Namely, we claim that \[
\V_R(R)=\V_R
\] 
is full, and so each test module constructed in using Algorithm \ref{alg} fails to be proxy small. 

Indeed,  let $A$ be the polynomial ring $k [ x_1,\ldots, x_e ]$. In \cite[Example 9.14]{DGI}, the authors show that for any element $f$ of $A$ contained in the ideal  \[
J = A \{x_1^2-x_i^2:2 \leqslant i \leqslant e\} + A \{x_ix_j:1 \leqslant i<j \leqslant e\},
\] $\pd_{A/(f)} (A/J) = \infty.$ By completing at $(x_1,\ldots, x_e)$ it follows 
\[
\pd_{Q/(f)} (R) = \infty
\] for any $f$ in $I$ that is in the image of the completion $A\to Q$ mapping $J$ into $I$. Therefore, 
 $\pd_{Q/(f)} (R) =\infty$ for any $f$ that is a nonzero $k$-linear combination of the generators for $I$ and hence,  
 \[
 \V_R(R)=\V_R
 \]
 as claimed. 

To give an explicit illustration of how Algorithm \ref{alg} works, we consider the case when $e=3$. Following Algorithm \ref{alg}, we produce  modules $M_i = R/J_i$ defined by the ideals
$$J_1 = (x^2-y^2,y-z,x), \quad J_2 = (y^2-z^2,y,x), \quad J_3 = (xy,x-y,x-z)$$
$$J_4 = (x^2-y^2 + yz - xy, y-z, x-y-z) \quad \textrm{and} \quad J_5 = (xy-xz,y,x-z).$$
A priori, just considering $R$ as an equipresented non-complete intersection, all we know is that at least one of these is not proxy small. However, as discussed above \[\V_R(R)=\V_R=k^5\] and hence, each $M_i$ is {not} proxy small over $R$.  Furthermore, in this example, we do not need the assumption that $k$ is infinite to construct the singular artinian hypersurface quotients $M_i$ over $R$ that are not proxy small over $R$. Finally, it is also worth remarking that a similar conclusion was made in \cite[9.14]{DGI}, but the use of cohomological supports gives a simpler argument that these quotients cannot be proxy small $R$-modules. 
\end{example}

Despite the fact that Theorem \ref{t:characterization} only applies to rings satisfying Condition (\ref{e:technical}), we can still apply our strategy in some cases that do not satisfy Condition (\ref{e:technical}), as the following example shows.

\begin{example}\label{example thomas}
Let $k$ be a field of characteristic not 2, $Q = k \lb x,y,z\rb$, and take 
$R=Q/I$, where $I = (x^2+y^2+z^2,xyz,x^3)$. We can make use of our techniques to find a module that is not proxy small; however, $R$ does not satisfy (\ref{e:technical}), and thus is not covered by Theorem \ref{t:characterization}. In fact, $n-c = 3-1 = 2$, and on the other hand $R$ has an embedded deformation (cf. Example \ref{examplemaximalsupport}(\ref{examplesmallcodepth})) defined by $x^2+y^2+z^2$.  Thus, by \cite[5.3.6]{Pol2}, $\V_R(R)$ is a hyperplane in $\V_R=k^3$, and so it is a $2$-dimensional subspace. 

Consider the following ideals in $Q$:
\[
J_1 = (x^2+y^2+z^2, y, x^3) \textrm{ and } J_2 = (x^2-2z, xyz, y+z).
\]
First, note that $I \supseteq J_1$ and $I \supseteq J_2$, and that both $M_1 = Q/J_1$ and $M_2 = Q/J_2$ are artinian codimension two complete intersection rings. Moreover, since both $x^2+y^2+z^2$ and $x^3$ are minimal generators of $J_1$, and $xyz$ is not, the kernel $K_1$ of the $k$-vector space map $I/\m I \longrightarrow J_1 /\m J_1$ has dimension $1$, and it is generated by the image of $xyz$ in $I/\m I$. In contrast, $xyz$ is a minimal generator of $J_2$, and thus $K_1 \cap K_2 = 0$, where $K_2$ is the kernel of the map corresponding to $J_2$. By Lemma \ref{l:ben},
\[
V_R(M_1) \cap V_R(M_2) = 0,
\]
and therefore one of these two modules is not proxy small. However, since $\V_R(M_i)$ is a $1$-dimensional subspace of $\V_R$, it follows from Fact \ref{var}(\ref{var1}) that \emph{both} $M_1$ and $M_2$ fail to be proxy small over $R$.
\end{example}

\begin{example}
Consider $Q=k[ x,y,z,w]$, $I=(x^4,xy,yz,zw,w^3)$ and $R=Q/I$. According to Macaulay2 \cite{M2} computations, $\V_R(R)$ is the union of two hyperplanes in $\V_R=k^5$:
\[
\V_R(R)=\{(a_1,\ldots,a_5)\in k^5: a_1=0\text{ or }a_5=0\}.
\]
Note that $R$ is not equipresented but $R$ has spanning support, so any minimal generator of order two gives rise to a non-proxy small module via Algorithm \ref{alg}. For example, considering the minimal generator $yz$ we obtain the non-proxy small module 
\[M=Q/(yz,x,w,y-z).
\]The algorithm does not apply to minimal generators of order 3 or 4, yet we can still produce non-proxy small modules corresponding to, for example, $x^4$ or $w^3$: namely, $M_1=Q/(x^4,y,z,w)$ and $M_2=Q/(w^3,x,y,z)$, respectively. Indeed, one can directly check that
\[
\V_R(M_1)=\{(a_1,\ldots,a_5)\in k^5: a_1=0\}\text{ and }\V_R(M_2)=\{(a_1,\ldots,a_5)\in k^5: a_5=0\},
\]  and $(x^4,y,z,w)\supseteq I$ and $(w^3,x,y,z)\supseteq I$, justifying these are not proxy small $R$-modules. 
 \end{example}

\begin{example}[Stanley-Reisner rings]\label{example monomial}
    Let $k$ be any field, $Q = k[x_1, \ldots, x_d]$, and let $I = (f_1, \ldots, f_n) \subseteq (x_1, \ldots, x_d)^2$ be a monomial ideal in $Q$, minimally generated by monomials $f_1, \ldots, f_n$. Assume that $R = Q/I$ is not a complete intersection. If $I$ is squarefree, then we can always find an Artinian quotient of $R$ that is not proxy small, independently of whether $I$ satisfies the hypothesis of Theorem \ref{t:characterization}. In fact, the process we will describe works as long as we assume that the supports\footnote{The support of a monomial $f$ is the set of variables that appear in $f$ with nonzero coefficient.} of any two of $f_1, \ldots, f_n$ are incomparable.
    
    Fix one of those $f_i$, say $f = f_1 = x_1^{a_1} \cdots x_d^{a_d}$, and assume without loss of generality that $a_1 \neq 0$. Then consider the ideal
    $$J = \left(f, x_1 - x_i, x_j \, \left|\right. \textrm{ for all } i, j \textrm{ such that } a_i \neq 0, a_j = 0 \right).$$
This ideal $J$ has some useful properties.
    \begin{enumerate}[label = \arabic*),leftmargin=1.5em]
    
        \item Our given set of generators for $J$ is a regular sequence.
        
        We gave $d$ generators, and $Q/J \cong k[x_1]/(x_1^{a_1+\cdots+a_n})$ has dimension $0$.
        
        \item $I \subseteq J$.
        
        By assumption, the support of each of the monomials $f_2, \ldots, f_n$ contains a variable $x_j$ not in the support of $f_1$, and thus $f_2, \ldots, f_n \in J$. 
        
        \item $f_2, \ldots, f_n \in (x_1, \ldots, x_d) J$.
        
        Each of these monomials is in $(x_j)$ for some $j$ with $a_j=0$, and has degree at least $2$. 
        
        \item $f \notin (x_1, \ldots, x_d) J$.
        
        It's enough to check that $f \notin (x_1 - x_i \, | \, a_i \neq 0)$, which is immediate once we set all variables to $1$.
    \end{enumerate}
    Now if we follow this recipe and construct $J_1, \ldots, J_n$ for each $f_1, \ldots, f_n$, each one of these ideals contains exactly one of the $f_i$ as a minimal generator, and thus
    \[
    \bigcap_{i = 1}^n \ker \V_{R\to Q/J_i} = 0\,.
    \]
    By Strategy \ref{strategy} and Lemma \ref{l:ben}, one of $Q/J_1, \ldots, Q/J_n$ is not a proxy small module.
\end{example}

Finally, what are examples of rings where we \emph{cannot} apply our strategy as of yet? A minimal example of a ring not satisfying (\ref{e:technical}) would be presented by an ideal $I$ with $3$ minimal generators of different $\m$-adic orders, and such that
$$\dim_k \left({\rm span} \V_R(R)\right) = 1\,,$$
since in that setting we would have
$$\dim_k \left( \dfrac{\m^{d+1}\cap I}{\m I} \right) \leqslant 1 = \dim_k \left( {\rm span} \V_R(R) \right)\,.$$
Note, however, that even given such a ring, it is not immediate that our strategy wouldn't apply --- we just have not yet proven that it does. However, we have no examples of such rings.

\begin{question}
Is there a non-complete intersection $R$ with $\dim_k\left( {\rm span} \V_R(R)\right)=1$? That is, can $\V_R(R)$ be a line?
\end{question}

By \cite[5.3.6]{Pol}, no such examples can exist when the codepth of $R$ is less than 4. Furthermore, the investigations in this article seem to suggest that the generic variety $\V_R(R)$ tends to be ``large," in the sense that it has small codimension, when $R$ is not a complete intersection.

\section*{Acknowledgements}
This work was started while the second author was on a year long visit to the University of Utah; she thanks the University of Utah Math Department for their hospitality. The authors thank Thomas Polstra for suggesting examples which inspired Example \ref{example thomas}. The authors also thank Claudia Miller and Craig Huneke for helpful conversations, and Janina Letz, Jian Liu,  Peder Thompson, and Srikanth Iyengar for their helpful comments on an earlier draft of the paper.

Macaulay2 computations \cite{M2} were crucial throughout this project. The second author was supported by NSF grant DMS-2001445. The third author was supported by NSF grant DMS-2002173 and NSF RTG grant DMS-1840190.

\bibliographystyle{plain}

\end{document}